\documentclass{amsart}

\usepackage{amsmath}
\usepackage{amsfonts}
\usepackage{rotating}
\usepackage{bbm}
\usepackage[all,cmtip]{xy}

\newtheorem{fed}{\textbf{Definition}}[section]
\newtheorem{thm}[fed]{\textbf{Theorem}}
\newtheorem{lemma}[fed]{\textbf{Lemma}}

\newtheorem{rem}[fed]{\textbf{Remark}}
\newtheorem{prop}[fed]{\textbf{Proposition}}
\newtheorem{cor}[fed]{\textbf{Corollary}}
\usepackage{amssymb,bbm,graphicx,epsfig,psfrag,epic,eepic,latexsym}
\usepackage{amsmath}
\usepackage{mathrsfs}

\begin{document}
\title{The Omega limit set of a family of chords}
\author{Edward Belbruno, Urs Frauenfelder, Otto van Koert}
\begin{abstract}
In this paper we study the limit behavior of a family of chords on compact energy hypersurfaces of a family of Hamiltonians. Under the assumption that the energy hypersurfaces are all of contact type, we give results on the Omega limit set of this family of chords. Roughly speaking, such a family must either end in a degeneracy, in which case it joins another family, or can be continued.

This gives a Floer theoretic explanation of the behavior of certain families of symmetric periodic orbits in many well-known problems, including the restricted three-body problem. 
\end{abstract}

\maketitle

\section{Introduction}
In many well-known dynamical systems, such as the restricted three-body problem, families of periodic orbits are known to exist. Often existence is proved near an integrable or otherwise easily understood case. How far the family then extends is hard to quantify by analytical means, although one can often verify numerically the existence of such a family for a wide range of parameters.
In this paper we focus our attention on symmetric periodic orbits and more generally chords.

The goal  is to provide a Floer-theoretic reason for the behavior of families of chords. 
The basic setup consists of a symplectic manifold $(M,\omega)$, a smooth $1$-parameter family of autonomous Hamiltonians $H_\mu$ and a pair of exact Lagrangians $L_0$ and $L_1$.
We will assume that $\Sigma_\mu^{-1}:=H_\mu^{-1}(0)$ is a compact hypersurface that is of contact-type.

Suppose that $\{ v_\mu \}_{\mu \in [0,\mu_\infty)}$ is a smooth $1$-parameter family of non-degenerate Reeb chords in $\Sigma_\mu=H^{-1}_\mu(0)$ connecting $L_0$ and $L_1$.
Then one of the following options must hold
\begin{enumerate}
\item the family extends across $\mu_\infty$ to a family $[0,\mu_\infty+\delta)$
\item $v_{\mu_\infty}$ exists and is a degenerate Reeb chord. In this case, there is another family with the same $\Omega$-limit set.
\end{enumerate}
Without the contact condition, the family can cease to exist for other reasons than degeneracy, for example a blue sky catastrophe can occur, meaning that the period blows up as $\mu \to \mu_\infty$. This happens, for example, on the cotangent bundle of a genus $g$ surface.
On this symplectic manifold we can choose a $1$-parameter family of Hamiltonians with $\Sigma_\mu \cong ST^*S_g$, where the dynamics change from geodesic flow into the horocycle flow. The latter has no periodic orbits, showing that the family can simply stop in such a case without ending up in either option (1) or (2).

To give a detailed and general statement, we introduce the $\Omega$-limit set of a family of chords. This consists of all limits of the family when a sequence
of parameters $\mu_\nu$ converges to $\mu_\infty$. Our first main result is
\\ \\
\textbf{Theorem\,A:} \emph{The $\Omega$-limit set is nonempty, compact and connected.}
\\ \\
We state Theorem\,A again in Section~\ref{omega}, where we prove it. The proof relies on the Theorem of Arzela-Ascoli.
Because our family of hypersurfaces is compact, the image of the chords lies in a compact set. The contact condition is
used to prove equicontinuity. For that purpose we interpret chords as critical points of the Rabinowitz action functional. 
\\ \\
Our second main result is
\\ \\
\textbf{Theorem\,B:} \emph{If the $\Omega$-limit set is isolated and the family cannot be extended over the limit set for
$\mu>\mu_\infty$, another family for $\mu<\mu_\infty$ converges to the limit set.}
\\ \\
The precise statement of Theorem\,B together with its proof can be found in Section~\ref{gradient}. Intuitively the
Theorem is rather clear. Because the family cannot be extended over the $\Omega$-limit set the local Rabinowitz Floer homology of the limit set vanishes and therefore for $\mu<\mu_\infty$ there has to be a second family which 
kills the first one. In our proof we do not actually need the full strength of local Rabinowitz Floer homology but use
a homotopy of homotopies argument for gradient flow lines. This is technically easier because it does not involve gluing. 

Similar results for periodic orbits instead of chords should hold true. The advantage for chords is that if one interprets
them as critical points of the Rabinowitz action functional they are generically Morse critical points. This never happens for
periodic orbits because as critical points of the Rabinowitz action functional they are parametrized and by reparametrizing them one gets different critical points of the Rabinowitz action functional. In particular, critical points are never isolated. 
One can interpret reparametrization as a circle action on the free loop space and in the periodic orbit case the Rabinowitz action functional is invariant under this circle action. Therefore it is generically Morse-Bott and its critical points arise in circle families. It should be interesting to consider the local equivariant Rabinowitz Floer homology of an $\Omega$-limit
set of a family of periodic orbits to understand what happens if the family cannot be extended over the limit set.   

To conclude the introduction, let us point out that many classical dynamical systems have been proved to be of contact-type. For example, regular energy levels of all mechanical Hamiltonians as well as regularized energy hypersurfaces of the planar and spatial restricted three-body problem in a large range of energy values, see \cite{albers-frauenfelder-koert-paternain,cho-kim}, are of contact-type.
Mechanical Hamiltonians with compact energy hypersurfaces include the H\'enon-Heiles Hamiltonian, see \cite{salomao} for a discussion of some of its properties related to convexity, which is stronger than the contact condition.

The results of this paper hence apply to these systems. We have included numerics illustrating some of these phenomena of families in the restricted three-body problem in Figure~\ref{fig:planar_RTBP}. Results of this paper were previously announced and applied to the spatial restricted three-body problem in our earlier paper \cite{belbruno-frauenfelder-vankoert} on polar orbits in the lunar problem.

\subsection*{Acknowledgements}
Edward Belbruno would like acknowledge the support of Humboldt Stiftung of the Federal Republic of Germany that made this research possible and the support of the University of Augsburg for his visit from 2018 until 2019.
Urs Frauenfelder was supported by DFG grant FR 2637/2-1 of the German government.
Otto van Koert was supported by NRF grant NRF-2016R1C1B2007662, which was funded by the Korean Government.

\section{Chords}
Suppose that $(M,\lambda)$ is an exact symplectic manifold, i.e., $\lambda \in \Omega^1(M)$ is a one-form such that 
$$
\omega=d\lambda
$$ 
is a symplectic form. Assume also that $L_0, L_1 \subset M$ are two exact Lagrangian submanifolds
in the sense that they are Lagrangian submanifolds of $(M,\omega)$ with the additional property that 
$$
\lambda|_{L_i}=0
$$
for $i \in \{0,1\}$. Furthermore, we are given a smooth function
$$
H \colon M \to \mathbb{R},
$$ 
referred to as the Hamiltonian, with the property that $0$ is a regular value of $H$ so that its level set
$$\Sigma=H^{-1}(0)$$
is a smooth hypersurface in $M$. We assume further that
$$\Sigma \pitchfork L_i, \quad i \in {0,1},$$
i.e. $\Sigma$ intersects $L_i$ transversally in the sense that if $x \in \Sigma \cap L_i$ then
$$T_x M=T_x \Sigma+T_x L_i.$$
In particular, this implies that
$$\mathcal{L}_i:=L_i \cap \Sigma, \quad i \in \{0,1\}$$
are smooth submanifolds of $\Sigma$. If the dimension of $M$ is $2n$, then the dimension of $\Sigma$ is $2n-1$ and the dimension of each
$\mathcal{L}_i$ is $n-1$. The Hamiltonian vector field of $H$ is implicitly defined by the condition
$$dH=\omega(\cdot,X_H).$$
Note that by antisymmetry of the symplectic form we get
$$dH(X_H)=\omega(X_H,X_H)=0$$
so that $X_H$ is tangent to the energy hypersurface $\Sigma$. In particular, $\Sigma$ is invariant under the flow of $X_H$. If one thinks of $H$ as energy then this means that energy is preserved. 

\begin{lemma}\label{nottan}
The Hamiltonian vector field $X_H$ is never tangent to $\mathcal{L}_i$ for $i \in \{0,1\}$.
\end{lemma}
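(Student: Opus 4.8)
The plan is to reduce the statement about $\mathcal{L}_i$ to a statement about $L_i$ alone, and then to derive a contradiction from the Lagrangian condition together with transversality. Recall that $X_H$ is always tangent to $\Sigma$, since $dH(X_H)=\omega(X_H,X_H)=0$. Because $\mathcal{L}_i=L_i\cap\Sigma$, at a point $x\in\mathcal{L}_i$ we have $T_x\mathcal{L}_i=T_xL_i\cap T_x\Sigma$, and hence $X_H(x)\in T_x\mathcal{L}_i$ if and only if $X_H(x)\in T_xL_i$. Thus it suffices to show that $X_H$ is never tangent to $L_i$ along $\mathcal{L}_i$.

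First I would argue by contradiction, assuming $X_H(x)\in T_xL_i$ for some $x\in\mathcal{L}_i$. Since $L_i$ is Lagrangian, the subspace $T_xL_i$ is its own symplectic orthogonal complement; in particular $\omega(X_H(x),v)=0$ for every $v\in T_xL_i$. Rewriting this via the defining relation $dH=\omega(\cdot,X_H)$, which gives $\omega(X_H(x),v)=-dH_x(v)$ by antisymmetry, I would conclude that $dH_x(v)=0$ for all $v\in T_xL_i$, i.e. $T_xL_i\subseteq\ker dH_x=T_x\Sigma$. This contradicts the transversality assumption $\Sigma\pitchfork L_i$: the inclusion $T_xL_i\subseteq T_x\Sigma$ would force $T_xM=T_x\Sigma+T_xL_i=T_x\Sigma$, which is impossible since $\dim T_x\Sigma=2n-1<2n=\dim T_xM$. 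This completes the argument.

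The proof is pure linear algebra at the single point $x$, so there is no genuine analytic obstacle; the one conceptually decisive move is recognizing that the self-orthogonality of the Lagrangian plane converts the hypothetical tangency of $X_H$ into the inclusion $T_xL_i\subseteq\ker dH_x$, after which transversality finishes the job immediately. The only bookkeeping to watch is the sign in $dH=\omega(\cdot,X_H)$ when passing between $\omega(X_H,v)$ and $dH(v)$, but the resulting inclusion $T_xL_i\subseteq\ker dH_x$ is insensitive to this sign. It is worth noting that the exactness of the Lagrangians, i.e. the condition $\lambda|_{L_i}=0$, plays no role in this lemma: only the Lagrangian property of $L_i$ and the transversality of $\Sigma$ and $L_i$ are used.
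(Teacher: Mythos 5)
Your proof is correct, and it rests on the same two pillars as the paper's --- the isotropy of the Lagrangian tangent space and the transversality $\Sigma\pitchfork L_i$ --- but it arranges them in genuinely different ways. The paper argues directly: since $X_H(x)\neq 0$, non-degeneracy of $\omega$ produces a vector $\eta\in T_xM$ with $\omega(X_H,\eta)\neq 0$; transversality is then used to \emph{decompose} $\eta=\eta_0+\eta_1$ with $\eta_0\in T_xL_i$, $\eta_1\in T_x\Sigma$, and since $\omega(X_H,\cdot)$ annihilates $T_x\Sigma$, one gets $\omega(X_H,\eta_0)\neq 0$, which by isotropy forces $X_H\notin T_xL_i$. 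You instead argue by contradiction and use transversality as a \emph{dimension count}: tangency plus isotropy gives $\omega(X_H,v)=0$ for all $v\in T_xL_i$, hence $dH_x|_{T_xL_i}=0$, i.e. $T_xL_i\subseteq\ker dH_x=T_x\Sigma$, which makes the sum $T_x\Sigma+T_xL_i$ too small to equal $T_xM$. Your route never needs to invoke non-degeneracy of $\omega$ to produce a witness vector (nor, implicitly, that $X_H(x)\neq 0$); instead it leans on the regular-value identification $T_x\Sigma=\ker dH_x$, which the paper's version uses only in the weaker form $dH|_{T_x\Sigma}=0$. Both are equally elementary pointwise linear algebra, and your closing observations --- that the sign in $dH=\omega(\cdot,X_H)$ is immaterial here (the paper itself is sloppy about this sign, harmlessly, since the term vanishes) and that exactness $\lambda|_{L_i}=0$ is not used --- are accurate.
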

\begin{proof}
This is a consequence of the assumption that $L_i$ is transverse to $\Sigma$. Indeed, suppose that $x \in \mathcal{L}_i$. Because the symplectic form is non-degenerate there exists
$\eta \in T_x M$ such that 
$$\omega(X_H,\eta) \neq 0.$$
Since $L_i$ is transverse to $\Sigma$ we can decompose
$$\eta=\eta_0+\eta_1, \quad \eta_0 \in T_x L_i,\,\,\eta_1 \in T_x \Sigma.$$
Hence
$$0 \neq \omega(X_H,\eta)=\omega(X_H,\eta_0)+\omega(X_H,\eta_1)=\omega(X_H,\eta_0)+dH(\eta_1)=\omega(X_H,\eta_0).$$
Here we have used for the last equation that $\eta_1$ is tangent to the level set of $H$. Since $\eta_0$ is a tangent vector of the Lagrangian, it follows from the definition of a Lagrangian subspace that
$$X_H \notin T_x L_i.$$
In particular, $X_H$ is never tangent to $\mathcal{L}_i$. This finishes the proof of the lemma. 
\end{proof} 

\begin{fed}
A \emph{chord} $(v,\tau) \in C^\infty([0,1],\Sigma) \times (0,\infty)$ from $\mathcal{L}_0$ to $\mathcal{L}_1$ is a solution of the problem
$$\left\{\begin{array}{cc}
\partial_t v(t)=\tau X_H(v(t)), & t \in [0,1],\\
v(i) \in \mathcal{L}_i, & i \in \{0,1\}.
\end{array}\right.$$
\end{fed}
If we reparametrize a chord $(v,\tau)$ to
$$v_\tau(t):=v\big(\tfrac{t}{\tau}\big), \quad t \in [0,\tau]$$
then $v_\tau \in C^\infty([0,\tau],\Sigma)$ is a solution of the problem
$$\left\{\begin{array}{cc}
\partial_t v_\tau(t)=X_H(v_\tau(t)), & t \in [0,\tau],\\
v_\tau(0) \in \mathcal{L}_0, & \\
v_\tau(\tau) \in \mathcal{L}_1. &
\end{array}\right.$$
In view of this reparametrization we refer to $\tau$ as the \emph{period} of the chord. 
\\ \\
We abbreviate by $\phi^t_H$ the flow of the Hamiltonian vector field $X_H$, i.e.
$$\phi^0_H=\mathrm{id}|_M,\quad \frac{d}{dt}\phi^t_H(x)=X_H(\phi^t_H(x)),\quad x \in M.$$
If we set
$$\Phi \colon \mathcal{L}_0 \times (0,\infty) \to \Sigma, \quad (x,\tau) \mapsto \phi_H^\tau(x)$$
then the map
$$(v,\tau) \mapsto (v(0),\tau)$$
gives a one to one correspondence between the set of chords and the set $\Phi^{-1}(\mathcal{L}_1)$.
In the following definition we use this identification. 
\begin{fed}
A chord $(v,\tau)$ is called \emph{non-degenerate} if $\Phi$ is transverse to $\mathcal{L}_1$ at $(v,\tau)$, i.e.,
$$d \Phi(v,\tau) T_{(v,\tau)}(\mathcal{L}_0 \times (0,\infty))\oplus T_{\Phi(v,\tau)}\mathcal{L}_1=
T_{\Phi(v,\tau)}\Sigma.$$
Otherwise, the chord is called \emph{degenerate}.
\end{fed}
In view of Lemma~\ref{nottan} we get the following equivalent characterization of a non-degenerate chord.
\begin{lemma}
A chord $(v,\tau)$ is non-degenerate if and only if
$$d\phi^\tau(v(0)) T_{v(0)}\mathcal{L}_0 \cap T_{v(1)}\mathcal{L}_1=\{0\}.$$
\end{lemma}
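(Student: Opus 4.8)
The plan is to translate the transversality condition defining non-degeneracy into a linear statement in the tangent space $T_{v(1)}\Sigma$ at the endpoint $v(1)=\phi^\tau_H(v(0))$, and then to remove the flow direction using the contact structure so that only the two Lagrangian pieces remain.

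First I would compute the differential of $\Phi$ at the point $(v(0),\tau)$ corresponding to the chord. For $(\eta,s)\in T_{v(0)}\mathcal{L}_0\oplus\mathbb{R}$ one has
$$d\Phi(v(0),\tau)(\eta,s)=d\phi^\tau_H(v(0))\eta+s\,X_H(v(1)),$$
since $\partial_\tau\phi^\tau_H=X_H\circ\phi^\tau_H$. Writing $A:=d\phi^\tau_H(v(0))T_{v(0)}\mathcal{L}_0$ and $R:=X_H(v(1))$, the image of $d\Phi$ is $A+\mathbb{R}R$. By Lemma~\ref{nottan} applied at $v(0)$ we have $X_H(v(0))\notin T_{v(0)}\mathcal{L}_0$, and since $R=d\phi^\tau_H(v(0))X_H(v(0))$ this gives $R\notin A$; likewise $R\notin T_{v(1)}\mathcal{L}_1=:B$ by Lemma~\ref{nottan} at $v(1)$. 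Hence $\dim(A+\mathbb{R}R)=n$, while $\dim B=n-1$ and $\dim T_{v(1)}\Sigma=2n-1$. As the dimensions add up to $2n-1$, the defining equality of non-degeneracy is a direct-sum decomposition that holds exactly when $(A\oplus\mathbb{R}R)\cap B=\{0\}$.

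The remaining task is to show that this is equivalent to $A\cap B=\{0\}$. One implication is immediate from $A\subseteq A\oplus\mathbb{R}R$. For the converse I must rule out that a nonzero $b\in B$ has the form $b=a+sR$ with $a\in A$ and $s\neq0$; equivalently, I must show $R\notin A+B$. Indeed, once this is known, $b=a+sR\in B$ with $s\neq0$ would give $R=s^{-1}(b-a)\in A+B$, a contradiction, so every such $b$ satisfies $s=0$ and hence $b=a\in A\cap B$. Thus $(A\oplus\mathbb{R}R)\cap B=A\cap B$, and the two vanishing conditions coincide.

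I expect $R\notin A+B$ to be the main obstacle, and this is where the contact-type hypothesis is genuinely needed. Set $\alpha:=\lambda|_{T\Sigma}$. Because the Lagrangians are exact, $\lambda|_{L_i}=0$, so $T_{v(1)}\mathcal{L}_1\subseteq\ker\alpha_{v(1)}$ and therefore $B\subseteq\ker\alpha_{v(1)}$; the same observation at $v(0)$ combined with the fact that the Reeb flow of $X_H$ preserves the hyperplane field $\ker\alpha$ yields $A=d\phi^\tau_H(v(0))T_{v(0)}\mathcal{L}_0\subseteq d\phi^\tau_H(v(0))\ker\alpha_{v(0)}=\ker\alpha_{v(1)}$ as well. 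On the other hand, contact-type means $\alpha(X_H)\neq0$ along $\Sigma$, so $R=X_H(v(1))\notin\ker\alpha_{v(1)}$. Since $A+B\subseteq\ker\alpha_{v(1)}$ and $R$ is transverse to this hyperplane, we conclude $R\notin A+B$, which completes the reduction. The delicate point is precisely the invariance $d\phi^\tau_H(v(0))\ker\alpha_{v(0)}=\ker\alpha_{v(1)}$, i.e.\ $A\subseteq\ker\alpha_{v(1)}$: this uses that $v$ is a Reeb chord on a contact-type level set and not on an arbitrary hypersurface, and it is exactly the structural input beyond Lemma~\ref{nottan} that guarantees the flow direction $R$ contributes a genuinely independent dimension rather than being absorbed into $A+B$.
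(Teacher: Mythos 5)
Your reduction is correct, and it is essentially the content the paper leaves implicit behind the phrase ``in view of Lemma~\ref{nottan}'': the formula $d\Phi(v(0),\tau)(\eta,s)=d\phi_H^\tau(v(0))\eta+sX_H(v(1))$, the facts $R\notin A$, $R\notin B$ from Lemma~\ref{nottan}, and the dimension count reducing everything to the implication $A\cap B=\{0\}\Rightarrow R\notin A+B$. You also correctly identify where the real difficulty sits. The problem is that the claim you use to resolve it is false: the flow of $X_H$ does \emph{not} preserve the hyperplane field $\ker\alpha$, $\alpha=\lambda|_{T\Sigma}$. On a contact-type level set one has $X_H=fR_{\mathrm{Reeb}}$ with $f=dH(Y)|_\Sigma$ positive but in general non-constant, and on $\Sigma$,
$$L_{X_H}\alpha=\iota_{X_H}d\alpha+d\big(\alpha(X_H)\big)=-dH|_{T\Sigma}+df=df,$$
so that for $\xi\in T_{v(0)}\Sigma$,
$$\lambda\big(d\phi_H^\tau(v(0))\xi\big)=\lambda(\xi)+\int_0^\tau df\big(d\phi_H^t(v(0))\xi\big)\,dt,$$
which need not vanish when $\lambda(\xi)=0$. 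A reparametrized Reeb flow is a contactomorphism only if $df|_{\ker\alpha}=0$; it is the genuine Reeb field that satisfies $L_{R_{\mathrm{Reeb}}}\alpha=0$, not $X_H$. Hence your key inclusion $A=d\phi_H^\tau(v(0))T_{v(0)}\mathcal{L}_0\subseteq\ker\alpha_{v(1)}$ fails in general, and with it the proof.

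Moreover, this is not a patchable slip in an otherwise sound strategy. Run with the Reeb flow your argument does prove that non-degeneracy is equivalent to $d\phi_{R_{\mathrm{Reeb}}}^{T}(T_{v(0)}\mathcal{L}_0)\cap T_{v(1)}\mathcal{L}_1=\{0\}$, since $\mathrm{Im}\,d\Phi=A+\mathbb{R}R=d\phi_{R_{\mathrm{Reeb}}}^{T}(T_{v(0)}\mathcal{L}_0)+\mathbb{R}R$ (the flow-out of $\mathcal{L}_0$ is parametrization independent). But $d\phi_H^\tau\xi$ and $d\phi_{R_{\mathrm{Reeb}}}^{T}\xi$ differ by a multiple of $R$ with coefficient proportional to $\int_0^\tau df(d\phi_H^t\xi)\,dt$ --- precisely the kind of term you are trying to exclude --- and the two intersection criteria genuinely diverge when $df\neq0$ along the chord. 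A concrete check: take $H=\tfrac12 g(q)\big(|p|^2-1\big)$ on $T^*\mathbb{R}^2$ with $g(q)=1+\tfrac12\sin q_2$, let $L_0$ be the conormal of the unit circle and $L_1=T_0^*\mathbb{R}^2$; the radial segments form a one-parameter family of chords, so each is degenerate, yet a direct computation of the linearized $X_H$-flow along the chord on the $q_1$-axis gives $d\phi_H^\tau(T\mathcal{L}_0)\cap T\mathcal{L}_1=\{0\}$. So the equivalence holds for the Reeb linearization but not, in this generality, for $d\phi_H^\tau$, and the contact hypothesis (which, note, is not even among the standing assumptions of the section where the lemma is stated) cannot be used in the way you use it.
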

\section{Rabinowitz action functional}
We will see that chords can be detected variationally as critical points of the Rabinowitz action functional.
For periodic orbits this functional was first considered in~\cite{rabinowitz},~Equation 2.7, and a Floer theory for this functional was developed in~\cite{cieliebak-frauenfelder}.
Extensions to the chord case can be found in \cite{kang,merry}. Some of the arguments that will be used here were earlier explored in \cite{albers-frauenfelder}.

Abbreviate by
$$\mathcal{P}=\big\{v \in C^\infty([0,1],M): v(i) \in L_i,\,\,i \in \{0,1\}\big\}$$
the space of paths in $M$ connecting $L_0$ with $L_1$. 
The Rabinowitz action functional
$$\mathcal{A}^H \colon \mathcal{P} \times (0,\infty) \to \mathbb{R}$$
at a point $(v,\tau) \in \mathcal{P} \times (0,\infty)$ is given by
$$\mathcal{A}^H(v,\tau)=\int_0^1 v^* \lambda-\tau \int_0^1 H(v(t)) dt.$$
The first term is just the area functional. One might think of $\tau$ as a Lagrange multiplier. Then the critical points of the Rabinowitz action functional correspond to critical points of the area functional subject to the constraint that the mean value of
the Hamiltonian $H$ has to vanish.
\begin{prop}\label{crit}
Critical points of $\mathcal{A}^H$ correspond to chords.
\end{prop}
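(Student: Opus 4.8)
The plan is to compute the first variation of $\mathcal{A}^H$ and read off its vanishing conditions as the chord equations. Fix $(v,\tau)\in\mathcal{P}\times(0,\infty)$ and consider a smooth variation $w\colon(-\epsilon,\epsilon)\times[0,1]\to M$, $w(s,\cdot)=v_s$, with $v_0=v$ and each $v_s\in\mathcal{P}$, and set $V:=\partial_s v_s|_{s=0}$ for the variation field along $v$. Since $v_s(i)\in L_i$ for all $s$, we necessarily have $V(i)\in T_{v(i)}L_i$. Together with an independent variation $\delta\tau$ of the Lagrange multiplier, these directions exhaust the admissible variations, so it suffices to compute $d\mathcal{A}^H(v,\tau)$ on them.

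First I would differentiate the area term. Writing $w^*\lambda$ on the rectangle and using $d\lambda=\omega$ (equivalently, an integration by parts in $t$ via Cartan's formula) gives
$$\frac{d}{ds}\Big|_{s=0}\int_0^1 v_s^*\lambda=\int_0^1\omega\big(V,\partial_t v\big)\,dt+\big[\lambda(V)\big]_0^1.$$
Here is the one genuinely delicate point, and the step I expect to be the crux: the boundary contribution $\lambda_{v(1)}(V(1))-\lambda_{v(0)}(V(0))$ must vanish. This is exactly where the exactness hypothesis $\lambda|_{L_i}=0$ enters, together with $V(i)\in T_{v(i)}L_i$: both boundary terms are then zero. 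Without exact Lagrangians this term would survive and the variational characterization would fail, so isolating and eliminating it is the heart of the argument.

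Next I would differentiate the constraint term $-\tau\int_0^1 H(v)\,dt$ in both directions, using the defining relation $dH=\omega(\cdot,X_H)$ for the $V$-direction. Combining with the area computation yields
$$d\mathcal{A}^H(v,\tau)(V,\delta\tau)=\int_0^1\omega\big(V,\partial_t v-\tau X_H(v)\big)\,dt-\delta\tau\int_0^1 H(v(t))\,dt.$$
Demanding that this vanish for all admissible $(V,\delta\tau)$ then separates into two conclusions. Taking $\delta\tau=0$ and letting $V$ range over vector fields compactly supported in $(0,1)$, the fundamental lemma of the calculus of variations together with the non-degeneracy of $\omega$ forces $\partial_t v=\tau X_H(v)$ on $[0,1]$. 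Taking $V=0$ and $\delta\tau$ arbitrary forces the mean-value condition $\int_0^1 H(v(t))\,dt=0$.

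Finally I would upgrade these to the full chord conditions. Since $\partial_t v=\tau X_H(v)$ and $dH(X_H)=0$, the energy $H(v(t))$ is constant in $t$; combined with $\int_0^1 H(v)\,dt=0$ this gives $H(v(t))\equiv 0$, so $v$ takes values in $\Sigma$, and in particular $v(i)\in L_i\cap\Sigma=\mathcal{L}_i$. Hence $(v,\tau)$ is a chord in the sense of the definition. The converse is immediate: a chord satisfies $\partial_t v=\tau X_H(v)$, lies in $\Sigma$ so that $\int_0^1 H(v)\,dt=0$, and has endpoints on $\mathcal{L}_i\subset L_i$ so that $v\in\mathcal{P}$ with $V(i)\in T_{v(i)}L_i$; substituting into the variation formula shows $d\mathcal{A}^H(v,\tau)=0$. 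This establishes the claimed correspondence.
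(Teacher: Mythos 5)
Your proposal is correct and follows essentially the same route as the paper: compute the first variation of $\mathcal{A}^H$, use $\lambda|_{L_i}=0$ together with $V(i)\in T_{v(i)}L_i$ to kill the boundary terms, extract the equations $\partial_t v=\tau X_H(v)$ and $\int_0^1 H(v)\,dt=0$ from the vanishing of the differential, and upgrade the mean-value constraint to $H(v(t))\equiv 0$ via constancy of $H$ along Hamiltonian trajectories. Your explicit invocation of the fundamental lemma of the calculus of variations and your verification of the converse direction are details the paper leaves implicit, but they do not change the argument.
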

\begin{proof}
For $v \in \mathcal{P}$, the tangent space of $\mathcal{P}$ at $v$ 
$$T_v \mathcal{P}=\big\{\widehat{v} \in \Gamma(v^* TM): \widehat{v}(i) \in T_{v(i)}L_i,\,\,i \in \{0,1\}\big\}$$
consists of vector fields along $v$ starting and ending in the corresponding Lagrangians. We first consider the differential
of the area functional
$$\mathcal{A}_0 \colon \mathcal{P} \to \mathbb{R}, \quad v \mapsto \int_0^1 v^* \lambda.$$
If $\widehat{v} \in T_v \mathcal{P}$ and $L_{\widehat{v}}$ denotes the Lie derivative in the direction of $\widehat{v}$, then we can compute using Cartan's formula
\begin{eqnarray*}
d \mathcal{A}_0(v) \widehat{v}&=&\int_0^1v^* L_{\widehat{v}}\lambda\\
&=&\int_0^1 v^*d \iota_{\widehat{v}}\lambda+\int_0^1 v^*\iota_ {\hat{v}}d\lambda\\
&=&\int_0^1 d v^* \iota_{\widehat{v}}\lambda+\int_0^1 v^*\iota_ {\hat{v}}\omega\\
&=&\lambda(v(1))\widehat{v}(1)-\lambda(v(0))\widehat{v}(0)+\int_0^1 \omega(\widehat{v}, \partial_t v)dt\\
&=&\int_0^1 \omega(\widehat{v}, \partial_t v)dt.
\end{eqnarray*}
Here we have used in the fourth equality Stokes' theorem and in the last equality we have taken advantage of the assumption that
$\lambda$ vanishes on $L_0$ and $L_1$. Using this formula we are now in position to compute the differential of the Rabinowitz
action functional. At a point $(v,\tau) \in \mathcal{P}\times (0,\infty)$ and a tangent vector
$$(\widehat{v},\widehat{\tau}) \in T_{(v,\tau)}\big(\mathcal{P}\times (0,\infty)\big)=T_v\mathcal{P} \times \mathbb{R}$$ 
we get
\begin{eqnarray}\label{rabdiff}
d \mathcal{A}^H(v,\tau)(\widehat{v},\widehat{\tau})&=&d\mathcal{A}_0(v)\widehat{v}-\tau \int_0^1 dH(v)\widehat{v}dt-
\widehat{\tau}\int_0^1 H(v)dt\\ \nonumber
&=&\int_0^1 \omega\big(\widehat{v}, \partial_t v-\tau X_H(v)\big)dt-\widehat{\tau}\int_0^1 H(v)dt.
\end{eqnarray}
At a critical point $(v,\tau)$ of the Rabinowitz action functional this expression has to vanish for all $(\widehat{v},\widehat{\tau}) \in T_v \mathcal{P} \times \mathbb{R}$ implying that $(v,\tau)$ solves
$$\left\{\begin{array}{cc}
\partial_t v(t)=\tau X_H(v(t)), & t \in [0,1],\\
\int_0^1 H(v)dt=0. &
\end{array}\right.$$
Because the Hamiltonian vector field is tangent to the level sets of the Hamiltonian we obtain from the first equation that
$H(v)$ is constant. Hence the mean value constraint in the second equation is actually equivalent to a pointwise constraint and the equation above can be equivalently written as
$$\left\{\begin{array}{cc}
\partial_t v(t)=\tau X_H(v(t)), & t \in [0,1],\\
H(v(t))=0, & t \in [0,1].
\end{array}\right.$$
But this is precisely the equation of a chord. This finishes the proof of the proposition. 
\end{proof} 

\begin{rem}
Under the identification of chords with critical points of the Rabinowitz action functional, the non-degeneracy condition of a chord  can be thought of as a Morse condition for the critical point. 
\end{rem}

\section{The contact condition}

On the exact symplectic manifold $(M,\omega=d\lambda)$ the Liouville vector field $Y$ is defined implicitly by the condition
$$\lambda=\omega(Y,\cdot).$$
We now assume the following contact condition on $\Sigma$
$$dH|_\Sigma(Y)>0.$$
This implies that
$$Y \pitchfork \Sigma,$$
i.e., the Liouville vector field is transverse to the energy hypersurface $\Sigma$ and the restriction of the one-form $\lambda$
to $\Sigma$ is a contact form. Under this assumption we can define the 
Reeb vector field on $\Sigma$ uniquely by the requirement
$$\lambda(R)=\omega(Y,R)=1,\quad \omega(R,\xi)=0,\,\,\xi \in T\Sigma.$$
Note that the restriction of the Hamiltonian vector field to $\Sigma$ is positively parallel to the Reeb vector field. Indeed, if we define  
$$f \colon \Sigma \to (0,\infty), \quad x \mapsto dH(Y)(x)$$ it follows that
$$\omega(Y,X_H)|_\Sigma=dH(Y)|_\Sigma=f$$
which together with 
$$0=dH (\xi)=\omega(\xi,X_H)=-\omega(X_H,\xi), \quad \xi \in T\Sigma$$
implies that for every $x \in \Sigma$ it holds that
$$X_H(x)=f(x)R(x).$$
We suppose further that $\Sigma$ is compact. As a consequence there exists $\kappa\geq 1$ such that
\begin{equation}\label{est}
\frac{1}{\kappa} \leq f(x) \leq \kappa,\quad x \in \Sigma.
\end{equation}
\begin{lemma}\label{esti}
Under the above assumptions, if $(v,\tau)$ is a chord, then its action can be estimated by the period as
$$\frac{\tau}{\kappa}\leq \mathcal{A}^H(v,\tau) \leq \kappa \tau.$$
\end{lemma}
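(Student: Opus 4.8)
The plan is to exploit the two defining features of a chord: that its image lies entirely on the energy hypersurface $\Sigma$, and that its velocity is $\tau$ times the Hamiltonian vector field. First I would observe that, since $v(t) \in \Sigma = H^{-1}(0)$ for every $t \in [0,1]$, the integrand $H(v(t))$ vanishes identically. Consequently the Lagrange-multiplier term drops out of the Rabinowitz action functional, leaving only the area term
$$\mathcal{A}^H(v,\tau) = \int_0^1 v^* \lambda = \int_0^1 \lambda\big(v(t)\big)\big(\partial_t v(t)\big)\, dt.$$

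Next I would substitute the chord equation $\partial_t v(t) = \tau X_H(v(t))$ into this expression. Pulling the constant $\tau$ out of the integral yields
$$\mathcal{A}^H(v,\tau) = \tau \int_0^1 \lambda\big(X_H(v(t))\big)\, dt.$$
Here the contact condition enters decisively. On $\Sigma$ we have the pointwise identity $X_H(x) = f(x) R(x)$ established above, and by definition of the Reeb vector field $\lambda(R) = 1$. Therefore $\lambda(X_H(v(t))) = f(v(t))\,\lambda(R(v(t))) = f(v(t))$, and the action becomes
$$\mathcal{A}^H(v,\tau) = \tau \int_0^1 f\big(v(t)\big)\, dt.$$

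The final step is purely a matter of inserting the two-sided bound~\eqref{est}, namely $\tfrac{1}{\kappa} \leq f(x) \leq \kappa$ for all $x \in \Sigma$, which holds thanks to compactness of $\Sigma$ and positivity of $f$. Since $\tau > 0$, monotonicity of the integral gives
$$\frac{\tau}{\kappa} = \tau \int_0^1 \frac{1}{\kappa}\, dt \leq \tau \int_0^1 f\big(v(t)\big)\, dt \leq \tau \int_0^1 \kappa\, dt = \kappa \tau,$$
which is exactly the claimed estimate. I do not anticipate any genuine obstacle in this argument: the only point requiring care is recording that the Hamiltonian term vanishes because a chord sits on the zero level set (so that the action reduces to the area functional), after which the result is a direct computation combining the chord equation, the relation $X_H = fR$, and the uniform bound on $f$.
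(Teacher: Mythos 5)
Your proof is correct and follows exactly the paper's argument: drop the Hamiltonian term because the chord lies on $\Sigma = H^{-1}(0)$, substitute $\partial_t v = \tau X_H(v)$ to get $\mathcal{A}^H(v,\tau) = \tau \int_0^1 f(v(t))\,dt$ via $\lambda(X_H) = f$, and conclude with the bound \eqref{est}. The only difference is that you spell out the intermediate identity $\lambda(X_H) = f\,\lambda(R) = f$ explicitly, which the paper leaves implicit.
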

\begin{proof}
We compute
$$\mathcal{A}^H(v,\tau)=\mathcal{A}_0(v)=\int_0^1\lambda(\tau X_H(v(t))dt=\tau \int_0^1 f(v(t)) dt.$$
The estimate now follows in view of \eqref{est}. \end{proof}

\section{The Omega limit set}\label{omega}

We now suppose that we have a one-parameter family of Hamiltonian functions, namely a smooth function $H \colon M \times [0,1] \to \mathbb{R}$. For $\mu \in [0,1]$ we abbreviate
$$H_\mu:=H(\cdot,\mu) \in C^\infty(M).$$
Suppose that $0$ is a regular value of $H_\mu$ for every $\mu \in [0,1]$ and $H^{-1}(0)$ is compact. This implies that 
the level sets
$$\Sigma_\mu:=H_\mu^{-1}(0)$$
build a smooth family of compact hypersurfaces in $M$ and in particular all $\Sigma_\mu$ are diffeomorphic to each other. 
If $Y$ is the Liouville vector field on $M$ we assume that 
$$dH_\mu|_{\Sigma_\mu}(Y)>0$$
so that for every $\mu \in [0,1]$ the energy hypersurface $\Sigma_\mu$ satisfies the contact condition. 
If $R_\mu \in \Gamma(T \Sigma_\mu)$ denotes the Reeb vector field on $\Sigma_\mu$, then with the smooth function
$$f \colon H^{-1}(0) \to (0,\infty),\quad (x,\mu) \mapsto dH_\mu(Y)(x)$$ it holds for every $\mu \in [0,1]$ and for every $x \in \Sigma_\mu$
that
$$X_{H_\mu}(x)=f(x,\mu)R_\mu(x).$$
Abbreviate
$$H'_\mu:=\frac{\partial H(\cdot,\mu)}{\partial\mu} \in C^\infty(M).$$
By compactness there exists $\kappa\geq 1$ with the property that 
\begin{equation}\label{supest}
\frac{1}{\kappa} \leq f(x,\mu) \leq \kappa,\quad |H'_\mu(x)| \leq \kappa,\quad (x,\mu) \in H^{-1}(0).
\end{equation}
Suppose that $(v_0,\tau_0)$ is a non-degenerate chord on $\Sigma_0$. Due to the assumption that the chord is non-degenerate, there exists by the implicit function theorem $\mu_\infty=\mu_\infty(v_0,\tau_0) \in (0,1]$ with the property that there exist smooth maps
$$v \colon [0,1]\times[0,\mu_\infty) \to M, \quad \tau \colon [0,\mu_\infty) \to (0,\infty)$$
with the property that $(v(0),\tau(0))=(v_0,\tau_0)$ and for each $\mu \in [0,\,\mu_\infty)$ the tuple
$$(v_\mu,\tau_\mu):=(v(\cdot,\mu),\tau(\mu)) \in C^\infty([0,1],M) \times (0,\infty)$$
is a non-degenerate chord on $\Sigma_\mu$. We assume that $\mu_\infty$ is maximal with this property.
\begin{prop}\label{bound}
There exist constants $0<c_0<c_1<\infty$ such that
$$c_0\leq \tau_\mu \leq c_1, \quad \mu \in [0,\mu_\infty).$$
\end{prop}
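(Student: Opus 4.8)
The plan is to reduce the statement to a uniform bound on the Rabinowitz action of the family and then close the loop with a Gronwall argument. Set
$$a(\mu) := \mathcal{A}^{H_\mu}(v_\mu, \tau_\mu), \quad \mu \in [0, \mu_\infty).$$
By Lemma~\ref{esti}, applied to the chord $(v_\mu, \tau_\mu)$ on $\Sigma_\mu$ with the uniform constant $\kappa$ from \eqref{supest}, we have $\tfrac{1}{\kappa}\tau_\mu \leq a(\mu) \leq \kappa \tau_\mu$. In particular $a(\mu) > 0$, and it suffices to bound $a(\mu)$ from above and away from zero uniformly in $\mu$; the desired bounds on $\tau_\mu$ then follow immediately by rearranging these inequalities.

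First I would differentiate $a$ along the family. Since the family $\mu \mapsto (v_\mu, \tau_\mu)$ is smooth by construction (it was produced by the implicit function theorem), $a$ is a smooth function of $\mu$ and the chain rule gives two contributions: the explicit $\mu$-dependence of the functional and the variation of the critical point. The latter is $d\mathcal{A}^{H_\mu}(v_\mu, \tau_\mu)(\partial_\mu v_\mu, \partial_\mu \tau_\mu)$. Because $v_\mu(i) \in L_i$ for all $\mu$, differentiating in $\mu$ shows $\partial_\mu v_\mu(i) \in T_{v_\mu(i)} L_i$, so that $(\partial_\mu v_\mu, \partial_\mu \tau_\mu) \in T_{v_\mu}\mathcal{P} \times \mathbb{R}$; since $(v_\mu, \tau_\mu)$ is a critical point of $\mathcal{A}^{H_\mu}$ by Proposition~\ref{crit}, this term vanishes by the first variation formula \eqref{rabdiff}. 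Only the explicit derivative survives, and since the area term $\int_0^1 v_\mu^* \lambda$ carries no $\mu$-dependence we obtain
$$a'(\mu) = -\tau_\mu \int_0^1 H'_\mu(v_\mu(t))\, dt.$$

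Next I would turn this into a differential inequality. Using $|H'_\mu| \leq \kappa$ from \eqref{supest} together with the bound $\tau_\mu \leq \kappa\, a(\mu)$ from the first step, we get $|a'(\mu)| \leq \kappa\,\tau_\mu \leq \kappa^2 a(\mu)$. Since $a(\mu) > 0$ this reads $\bigl| \tfrac{d}{d\mu}\log a(\mu) \bigr| \leq \kappa^2$, and integrating from $0$ to $\mu \in [0, \mu_\infty) \subset [0,1]$ yields $a(0)\, e^{-\kappa^2} \leq a(\mu) \leq a(0)\, e^{\kappa^2}$. As $a(0) = \mathcal{A}^{H_0}(v_0, \tau_0)$ is a fixed positive number, feeding these bounds back through Lemma~\ref{esti} gives
$$c_0 := \frac{a(0)}{\kappa}\, e^{-\kappa^2} \leq \tau_\mu \leq \kappa\, a(0)\, e^{\kappa^2} =: c_1,$$
which is the claim.

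The argument is essentially self-contained; the one point requiring care is the vanishing of the critical-point contribution to $a'(\mu)$, that is, verifying that the $\mu$-variation of the family stays tangent to the path space $\mathcal{P}$ so that \eqref{rabdiff} applies. The contact and compactness hypotheses enter only through the uniform constant $\kappa$ of \eqref{supest}, which is exactly what makes both the conversion between period and action and the Gronwall estimate uniform in $\mu$. Note in particular that the bound $|a'| \leq \kappa^2 a$ is self-improving and does not presuppose any prior bound on $\tau_\mu$, so no circularity arises.
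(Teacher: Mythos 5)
Your proposal is correct and follows essentially the same route as the paper: differentiate the action $\mathcal{A}^{H_\mu}(v_\mu,\tau_\mu)$ along the family, use criticality to kill the implicit term, and combine Lemma~\ref{esti} with \eqref{supest} to get a Gronwall estimate $|a'|\leq \kappa^2 a$ that transfers back to the period. If anything, your version is slightly cleaner, since you only invoke the (correct) upper bound on $|a'|$ rather than the two-sided inequality \eqref{ineq}, and you make explicit why $(\partial_\mu v_\mu,\partial_\mu \tau_\mu)$ is an admissible tangent vector so that the first variation formula \eqref{rabdiff} applies.
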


\begin{proof}
The method of proof is very similar to the argument in \cite{frauenfelder-vankoert}, Theorem~7.6.1, used to preclude blue sky catastrophes.
By Proposition~\ref{crit} for every $\mu \in [0,\mu_\infty)$ we can interpret the chord $(v_\mu,\tau_\mu)$ as 
a critical point of the Rabinowitz action functional $\mathcal{A}^{H_\mu}$. Differentiating the action along the family of critical points we obtain
$$\frac{d}{d\mu}\mathcal{A}^{H_\mu}(v_\mu,\tau_\mu)=\mathcal{A}^{H'_\mu}(v_\mu,\tau_\mu)=-\tau_\mu \int_0^1 H_\mu'(v_\mu)dt.$$
In view of Lemma~\ref{esti} and the inequalities \eqref{supest} we estimate from this
\begin{equation}\label{ineq}
\frac{1}{\kappa^2}\mathcal{A}^{H_\mu}(v_\mu,\tau_\mu) \leq \bigg|\frac{d}{d\mu}\mathcal{A}^{H_\mu}(v_\mu,\tau_\mu)\bigg|
\leq \kappa^2 \mathcal{A}^{H_\mu}(v_\mu,\tau_\mu).
\end{equation}
Integrating this inequality we obtain the inequality
$$e^{-\kappa^2\mu}\mathcal{A}^{H_0}(v_0,\tau_0)\leq \mathcal{A}^{H_\mu}(v_\mu,\tau_\mu) \leq e^{\kappa^2\mu}\mathcal{A}^{H_0}(v_0,\tau_0).$$
Using once more Lemma~\ref{esti} we obtain from that the estimate
$$\frac{e^{-\kappa^2 \mu}}{\kappa}\tau_0 \leq \tau_\mu \leq \kappa e^{\kappa^2 \mu} \tau_0.$$
In particular, $\tau_\mu$ is uniformly bounded from above and below by
$$\frac{e^{-\kappa^2}}{\kappa}\tau_0 \leq \tau_\mu \leq \kappa e^{\kappa^2} \tau_0.$$
This finishes the proof of the proposition. 
\end{proof}

Let 
$$\Omega=\Omega(v_0,\tau_0)\subset \mathcal{P}\times (0,\infty)$$
be the Omega limit set of the family of chords $(v,\tau)$ consisting of all $(w,\sigma) \in \mathcal{P}\times (0,\infty)$
for which there exists a sequence $\mu_\nu \in [0,\mu_\infty)$ for $\nu \in \mathbb{N}$ such that 
$$\lim_{\nu \to \infty}
\mu_\nu=\mu_\infty, \quad \lim_{\nu \to \infty}(v_{\mu_\nu},\tau_{\mu_\nu})=(w,\sigma).$$
Here it suffices to require that the second limit is in the $C^0$-topology, since by bootstrapping the equation of a chord
we automatically get that the limit is actually in the $C^\infty$-topology and in particular, the Omega limit set consists of
chords on $\Sigma_{\mu_\infty}$.
\begin{thm}
The Omega limit set $\Omega$ is nonempty, compact and connected. Moreover, the Rabinowitz action functional
$\mathcal{A}^{H_{\mu_\infty}}$ is constant on $\Omega$. If $\mu_\infty<1$, then all chords in $\Omega$ are degenerate. If
$\mu_\infty=1$ and $\Omega$ consists of more than just one chord, then again all chords in $\Omega$ are degenerate.  
\end{thm}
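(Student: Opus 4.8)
The plan is to derive all four assertions from a single compactness input, namely the uniform Lipschitz bound on the family supplied by the contact condition. First I would record equicontinuity: since $(v_\mu,\tau_\mu)$ solves $\partial_t v_\mu=\tau_\mu X_{H_\mu}(v_\mu)$ with $v_\mu([0,1])\subset H^{-1}(0)$ compact and with $\tau_\mu$ bounded above by Proposition~\ref{bound}, the velocities $\partial_t v_\mu$ are uniformly bounded, so $\{v_\mu\}$ is a uniformly Lipschitz, hence equicontinuous, family taking values in a fixed compact set. By Arzela--Ascoli every sequence $\mu_\nu\to\mu_\infty$ has a $C^0$-convergent subsequence, whose limit together with the limit of $\tau_{\mu_\nu}\in[c_0,c_1]$ lies in $\Omega$; this proves $\Omega\neq\emptyset$. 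Closedness of $\Omega$ follows from a diagonal argument, a limit of points of $\Omega$ being again approximated by the family along parameters tending to $\mu_\infty$. Being closed and contained in the Arzela--Ascoli-precompact set, $\Omega$ is compact.

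For the action I would differentiate along the family. By the computation preceding \eqref{ineq} one has $\tfrac{d}{d\mu}\mathcal{A}^{H_\mu}(v_\mu,\tau_\mu)=-\tau_\mu\int_0^1 H'_\mu(v_\mu)\,dt$, which by \eqref{supest} and Proposition~\ref{bound} is bounded uniformly in $\mu$. Hence $\mu\mapsto\mathcal{A}^{H_\mu}(v_\mu,\tau_\mu)$ is Lipschitz on $[0,\mu_\infty)$ and extends continuously to $\mu_\infty$ with some limit $a_\infty$. Given $(w,\sigma)\in\Omega$, pick $\mu_\nu\to\mu_\infty$ with $(v_{\mu_\nu},\tau_{\mu_\nu})\to(w,\sigma)$; since the action functional depends continuously on $(\mu,v,\tau)$, one gets $\mathcal{A}^{H_{\mu_\infty}}(w,\sigma)=\lim_\nu\mathcal{A}^{H_{\mu_\nu}}(v_{\mu_\nu},\tau_{\mu_\nu})=a_\infty$. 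As this value is independent of $(w,\sigma)$, the functional $\mathcal{A}^{H_{\mu_\infty}}$ is constant on $\Omega$.

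For connectedness I would run the standard omega-limit argument in $\mathcal{P}\times(0,\infty)$. If $\Omega$ were disconnected, write $\Omega=A\sqcup B$ with $A,B$ nonempty and compact, and choose open sets $U\supset A$, $V\supset B$ with $\overline U\cap\overline V=\emptyset$. The precompactness above forces $(v_\mu,\tau_\mu)\in U\cup V$ for all $\mu$ close to $\mu_\infty$, since otherwise a stray subsequence would produce a limit point of $\Omega$ outside the open set $U\cup V$. As $\mu\mapsto(v_\mu,\tau_\mu)$ is continuous and $[\mu_0,\mu_\infty)$ is connected, its image lies entirely in $U$ or entirely in $V$, contradicting that the family accumulates on both $A\subset U$ and $B\subset V$.

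The heart of the matter, and the step I expect to be the main obstacle, is the degeneracy dichotomy; both remaining claims follow from showing that a non-degenerate chord in $\Omega$ forces the entire family to converge to it. Suppose $(w,\sigma)\in\Omega$ is non-degenerate. By the implicit function theorem there are $\delta>0$, a neighborhood $\mathcal{U}$ of $(w,\sigma)$, and a smooth family of chords $(\tilde v_\mu,\tilde\tau_\mu)$, $\mu\in(\mu_\infty-\delta,\mu_\infty+\delta)$, that is the unique chord in $\mathcal{U}$ on each $\Sigma_\mu$. Along a subsequence $\mu_\nu\to\mu_\infty$ realizing $(w,\sigma)$, local uniqueness gives $(v_{\mu_\nu},\tau_{\mu_\nu})=(\tilde v_{\mu_\nu},\tilde\tau_{\mu_\nu})$ for large $\nu$. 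I would then show the coincidence set $\{\mu\in(\mu_\infty-\delta,\mu_\infty):(v_\mu,\tau_\mu)=(\tilde v_\mu,\tilde\tau_\mu)\}$ is open, by local uniqueness around each non-degenerate common value, and closed, by continuity, hence all of $(\mu_\infty-\delta,\mu_\infty)$; this upgrades subsequential convergence to $(v_\mu,\tau_\mu)\to(w,\sigma)$, so in fact $\Omega=\{(w,\sigma)\}$. If $\mu_\infty<1$ the family then extends to $[0,\mu_\infty+\delta)$ via $(\tilde v_\mu,\tilde\tau_\mu)$, contradicting maximality of $\mu_\infty$; if $\mu_\infty=1$ the conclusion $\Omega=\{(w,\sigma)\}$ contradicts the hypothesis that $\Omega$ has more than one point. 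In either case no chord in $\Omega$ can be non-degenerate. The delicate point is precisely this open-closed argument, which requires that the local uniqueness neighborhoods from the implicit function theorem can be re-centered at each non-degenerate common chord along the interval.
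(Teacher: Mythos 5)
Your proposal is correct and follows essentially the same route as the paper: Arzela--Ascoli together with the period bounds of Proposition~\ref{bound} for nonemptiness and compactness (via the same diagonal trick), disjoint open neighborhoods plus connectedness of the parameter interval for connectedness, the uniform bound on $\tfrac{d}{d\mu}\mathcal{A}^{H_\mu}(v_\mu,\tau_\mu)$ for constancy of the action, and the implicit function theorem for the degeneracy dichotomy. The only notable difference is in the last step: the paper gets $\Omega=\{(w,\sigma)\}$ directly from the already-established connectedness of $\Omega$, since a non-degenerate chord is isolated among chords on $\Sigma_{\mu_\infty}$, whereas you reach the same conclusion by an open--closed continuation argument along the parameter interval; both rest on the same local uniqueness provided by the implicit function theorem.
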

\begin{proof}
Suppose that $\mu_\nu \in [0,\mu_\infty)$ is a sequence converging to $\mu_\infty$. By Proposition~\ref{bound}
the periods $\tau_{\mu_\nu}$ are uniformly bounded from above as well as uniformly bounded from below away from zero. By the chord equation the sequence $v_{\mu_\nu}$ is equicontinuous so that in view of the assumption that $H^{-1}(0)$ is compact 
we obtain by the Theorem of Arzela-Ascoli a convergent subsequence of $(v_{\mu_\nu},\tau_{\mu_\nu})$. In particular, $\Omega$
is not empty.

We next show that $\Omega$ is compact. Because the path space is metrizable, it suffices to show that $\Omega$ is sequentially
compact. Assume that 
$$x_\nu=(w_\nu,\sigma_\nu)$$ 
is a sequence in $\Omega$. By definition of $\Omega$ for each $\nu \in \mathbb{N}$ there exists a sequence $\mu_k^\nu$ converging to $\mu_\infty$ as $k$ goes to infinity such that the sequence
$$y_k^\nu:=(v_{\mu_k^\nu},\tau_{\mu_k^\nu})$$
converges to $x_\nu$ as $k$ goes to infinity. Choose a metric on $\mathcal{P}\times (0,\infty)$ which induces the topology.
We put $k_1=1$ and inductively define for $\nu \in \mathbb{N}$
$$k_{\nu+1}:=\min\bigg\{k:\mu_\infty-\mu_k^{\nu+1}\leq \frac{\mu_\infty-\mu_{k_\nu}^\nu}{2},
d(y_k^{\nu+1},\,\,x_{\nu+1}) \leq \frac{1}{\nu+1}\bigg\}.$$ 
We set
$$\mu_\nu:=\mu^\nu_{k_\nu}.$$
From the construction of $k_\nu$ we infer that the sequence $\mu_\nu$ converges to $\mu_\infty$ as $\nu$ goes to infinity. By the argument in the first paragraph of this proof it follows that there exists a subsequence $\nu_j$ and a chord
$x=(w,\sigma) \in \Omega$ such that 
$$\lim_{j \to \infty}(v_{\mu_{\nu_j}},\tau_{\mu_{\nu_j}})=x.$$
In order to prove compactness it suffices now to show that
$$\lim_{j \to \infty}x_{\nu_j}=x.$$
Noting that
$$y^{\nu_j}_{k_{\nu_j}}=(v_{\mu_{\nu_j}},\tau_{\mu_{\nu_j}})$$
we choose for $\epsilon>0$ a positive integer $j_0=j_0(\epsilon)$ satisfying
$$
\nu_{j_0} \geq \frac{2}{\epsilon},\qquad d\big(y^{\nu_j}_{k_{\nu_j}},x\big)\leq \frac{\epsilon}{2}, \quad \forall\,\,j \geq j_0.
$$
For $j \geq j_0$ we get the estimate
$$
d(x_{\nu_j},x)\leq d\big(x_{\nu_j},y^{\nu_j}_{k_{\nu_j}}\big)+d\big(y^{\nu_j}_{k_{\nu_j}},x\big)
\leq \frac{1}{\nu_j}+\frac{\epsilon}{2}\leq \frac{1}{\nu_{j_0}}+\frac{\epsilon}{2}=\epsilon.
$$ 
We have proved that $\Omega$ is compact.

The goal of this paragraph is to show that $\Omega$ is connected. We argue by contradiction and assume that $\Omega$ is not connected. This means that $\Omega$ can be written as
$$\Omega=\Omega_1 \cup \Omega_2$$
where both $\Omega_1$ and $\Omega_2$ are nonempty, closed and open subsets of $\Omega$ which are disjoint from each other, i.e.,
$$\Omega_1 \cap \Omega_2=\emptyset.$$
As we have already proved that $\Omega$ is compact, the sets $\Omega_1$ and $\Omega_2$ are compact as well. This allows us to find disjoint open neighborhoods  of $\Omega_1$ and $\Omega_2$, i.e., open subsets $U_1$ and $U_2$ in $\mathcal{P}\times (0,\infty)$ satisfying
$$U_1 \cap U_2=\emptyset, \quad \Omega_1 \subset U_1,\quad \Omega_2 \subset U_2.$$
Choose
$$x_1=(w_1,\sigma_1) \in \Omega_1, \quad x_2=(w_2,\sigma_2) \in \Omega_2.$$
Because both sets $\Omega_1$ and $\Omega_2$ are nonempty this is possible. Moreover, by construction of $\Omega$ there
exist sequences $\mu_k^1$ and $\mu_k^2$ in $[0,\mu_\infty)$ converging to $\mu_\infty$ such that
$$\lim_{k\to \infty}(v_{\mu_k^i},\tau_{\mu_k^i})=x_i, \quad i \in \{1,2\}.$$
We put $k_1=1$ and inductively define for $\nu \in \mathbb{N}$
$$k_{\nu+1}:=\left\{\begin{array}{cc}
\min\big\{k:\mu_k^2>\mu^1_{k_\nu}\big\} & \nu\,\,\mathrm{odd}\\
\min\big\{k:\mu_k^1>\mu^2_{k_\nu}\big\} & \nu\,\,\mathrm{even}.
\end{array}\right.$$
By construction the sequence $\mu_{k_\nu}$ converges to $\mu_\infty$. 
For $\nu$ large enough the path
$$
[\mu_{k_\nu},\mu_{k_{\nu+1}}]\to\mathcal{P}\times (0,\infty),\quad
\mu \mapsto (v_\mu,\tau_\mu)
$$
has the property that one of the endpoints of this path lies in $U_1$, whereas the
other is contained in $U_2$. Since $U_1$ and $U_2$ are disjoint, this allows us
to find 
$$\mu_\nu \in [\mu_{k_\nu},\mu_{k_{\nu+1}}]$$
meeting the requirement that
$$y_\nu:=(v_{\mu_\nu},\tau_{\mu_\nu}) \in \big(\mathcal{P}\times (0,\infty)\big)
\setminus (U_1 \cup U_2).$$
By construction the sequence $\mu_\nu$ converges to $\mu_\infty$ as $\nu$ goes to
infinity. Therefore by the first paragraph of this proof we conclude that there
exists a subsequence $\nu_j$ and 
$$x \in \Omega$$ 
such that 
$$\lim_{j \to \infty} y_{\nu_j}=x.$$
However, $U_1$ and $U_2$ were assumed to be open and therefore their complement is closed so that 
$$y \in \big(\mathcal{P} \times (0,\infty)\big) \setminus (U_1 \cap U_2)
\subset \big(\mathcal{P} \times (0,\infty)\big) \setminus (\Omega_1 \cap \Omega_2)
\subset \big(\mathcal{P} \times (0,\infty)\big) \setminus \Omega$$
This is a contradiction and therefore $\Omega$ has to be connected. 

We next show that the Rabinowitz action functional $\mathcal{A}^{H_{\mu_\infty}}$ is constant
on $\Omega$. By integrating \eqref{ineq} we obtain for every $0\leq \mu_1<\mu_2<\mu_\infty$ the
inequality
$$e^{-\kappa^2(\mu_2-\mu_1)}\mathcal{A}^{H_{\mu_1}}(v_{\mu_1},\tau_{\mu_1}) \leq
\mathcal{A}^{H_{\mu_2}}(v_{\mu_2},\tau_{\mu_2}) \leq 
e^{\kappa^2(\mu_2-\mu_1)}\mathcal{A}^{H_{\mu_1}}(v_{\mu_1},\tau_{\mu_1})$$
from which the desired conclusion follows. 

Finally if there is a non-degenerate chord in $\Omega$, then in view of the fact that $\Omega$
is connected, as proved above, it follows from the implicit function theorem, that $\Omega$ just
consists of one single chord and the family $(v,\tau)$ can be extended to $\mu>\mu_\infty$
unless $\mu_\infty=1$. This completes the proof of the theorem. 
\end{proof}

\section{Gradient flow lines}\label{gradient}

We assume that the same set-up as in Section~\ref{omega} holds. We denote by $\mathcal{C}_\mu \subset \mathcal{P}
\times (0,\infty)$ the set of all chords on $\Sigma_\mu$ for $\mu \in [0,1]$ or equivalently
$$\mathcal{C}_\mu=\mathrm{crit}\mathcal{A}^{H_\mu}$$
the set of critical points of the Rabinowitz action functional. We suppose that the following hypotheses hold true
\begin{description}
 \item[(i)] The Omega limit set $\Omega$ is isolated in the set of chords on $\Sigma_{\mu_\infty}$, i.e., there exists
 an open set $U \subset \mathcal{P} \times (0,\infty)$ such that
 $$U \cap \mathcal{C}_{\mu_\infty}=\Omega.$$
  \item[(ii)] The family of chords $(v,\tau)$ does not extend to $1$, i.e.,
  $$\mu_\infty<1$$ 
  and there exists a sequence $\mu_\nu \in (\mu_\infty,1]$ converging to $\mu_\infty$ with the
  property that on $\Sigma_{\mu_\nu}$ there are no chords in $U$, i.e., 
  $$\mathcal{C}_{\mu_\nu} \cap U=\emptyset, \quad \nu \in \mathbb{N}.$$
\end{description}
\begin{thm}\label{mainthm2}
Under the assumptions above there exists $\delta>0$ with the property that for all $\mu \in (\mu_\infty-\delta,\mu_\infty)$
there are on $\Sigma_\mu$ at least two chords in $U$, i.e.,
$$\#(\mathcal{C}_\mu \cap U) \geq 2.$$
\end{thm}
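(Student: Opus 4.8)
The plan is to read chords as critical points of the Rabinowitz action functional (Proposition~\ref{crit}) and, for $\mu$ near $\mu_\infty$, to study the local Rabinowitz Floer homology of the chords lying in $U$. The first step is to upgrade hypothesis (i) to a \emph{uniform} isolating condition. I would shrink $U$ to an open neighborhood $U'$ of $\Omega$ with $\overline{U'}\subset U$ and with the period $\tau$ confined to a compact subinterval of $(0,\infty)$ on $\overline{U'}$. I then claim that there is a $\delta>0$ such that no chord on $\Sigma_\mu$ meets $\partial U'$ for $\mu\in(\mu_\infty-\delta,\mu_\infty+\delta)$. Indeed, otherwise one finds $\mu_n\to\mu_\infty$ and chords $c_n\in\partial U'$, and the Arzela--Ascoli argument used in the proof of Theorem~A (images in the compact set $H^{-1}(0)$, periods bounded on $\overline{U'}$) yields a limiting chord $c_\infty$ on $\Sigma_{\mu_\infty}$ in the closed set $\partial U'$; but $c_\infty\in\mathcal{C}_{\mu_\infty}\cap U=\Omega\subset U'$ contradicts $c_\infty\in\partial U'$. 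Thus $U'$ is an isolating neighborhood, uniformly in $\mu$ near $\mu_\infty$, so chords cannot cross $\partial U'$ as $\mu$ varies in this range.

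With $U'$ fixed I would set up the local Rabinowitz Floer homology $HF^{\mathrm{loc}}(\mu)$ generated by the critical points of $\mathcal{A}^{H_\mu}$ inside $U'$, for $\mu\in(\mu_\infty-\delta,\mu_\infty+\delta)$. The analytic inputs are the action and period bounds (Lemma~\ref{esti}, Proposition~\ref{bound}, and the estimate \eqref{ineq}), which give the energy bounds that make the relevant gradient-flow moduli compact, together with the isolating property of $U'$, which keeps trajectories from escaping through $\partial U'$. Continuation invariance of $HF^{\mathrm{loc}}$ across the interval is then proved by a homotopy of homotopies for the gradient flow: the continuation maps for adjacent subintervals compose to a map chain homotopic to the identity, the chain homotopy being extracted from a compact one-parameter family of solutions. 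This is arranged so that only compactness and the cobordism structure of one-parameter moduli are used, avoiding the gluing analysis needed to erect the full Floer differential. Hence $HF^{\mathrm{loc}}(\mu)$ is independent of $\mu$ on $(\mu_\infty-\delta,\mu_\infty+\delta)$.

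Evaluating the two ends finishes the argument. On the right, hypothesis (ii) furnishes $\mu_\nu\in(\mu_\infty,\mu_\infty+\delta)$ with $\mathcal{C}_{\mu_\nu}\cap U=\emptyset$, hence $\mathcal{C}_{\mu_\nu}\cap U'=\emptyset$ and $HF^{\mathrm{loc}}(\mu_\nu)=0$; by continuation invariance $HF^{\mathrm{loc}}(\mu)=0$ throughout the interval. On the left, fix any $\mu\in(\mu_\infty-\delta,\mu_\infty)$. Since every limit of the family lies in $\Omega\subset U'$, the chord $(v_\mu,\tau_\mu)$ lies in $U'$ once $\delta$ is small, and it is non-degenerate, hence a Morse critical point contributing one generator. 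If it were the \emph{only} chord in $U'$, the local complex would have a single generator, which carries no differential, forcing $HF^{\mathrm{loc}}(\mu)\neq 0$, a contradiction. Therefore $\mathcal{C}_\mu\cap U'$, and a fortiori $\mathcal{C}_\mu\cap U$, contains at least two chords, which is the assertion with this $\delta$.

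The main obstacle is the confinement and compactness underlying the second step: one must ensure that the gradient-flow lines entering the local differential and, above all, the one- and two-parameter continuation moduli stay inside $U'$ and form compact spaces uniformly for $\mu$ near $\mu_\infty$. The action identities provide the energy bounds and the contact condition through Lemma~\ref{esti} and Proposition~\ref{bound} prevents the period from degenerating, but turning $U'$ into a genuine Conley-type isolating neighborhood for the flow, so that no trajectory touches $\partial U'$, is the delicate point, and it is exactly what lets the homotopy of homotopies run without gluing. A coarser alternative that already suffices for the stated inequality is to drop the differential and track only the count of chords in $U'$ modulo two: after a generic perturbation fixing the endpoints, the parametrized zero set of the Rabinowitz gradient over $[\mu,\mu_\nu]$ is a compact one-manifold whose boundary is the chord sets at the two ends, so their parities agree; the right end is empty while the left end contains the non-degenerate family chord, so a single chord there would give an odd boundary, which is impossible, and hence the left end has at least two chords.
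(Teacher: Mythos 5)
Your overall strategy---isolate $\Omega$, then play a local Floer-type invariant that must be zero (from hypothesis (ii)) against a single non-degenerate generator (the family chord)---is exactly the heuristic the paper states right after Theorem~\ref{mainthm2}, and your first step (no chords on $\partial U'$ for $\mu$ near $\mu_\infty$, via Arzela--Ascoli) is sound: it is Lemma~\ref{ball} together with the limiting argument of Corollary~\ref{fastcor}. The gap is in your second step, and it is the one you flag yourself without closing. First, a $\mu$-independent $HF^{\mathrm{loc}}$ with continuation maps ``chain homotopic to the identity'' cannot be extracted from ``compactness and cobordism structure of one-parameter moduli'' alone: $\partial^2=0$, the chain-map property, and above all the identification of the $R\to\infty$ end of a homotopy of homotopies with the \emph{composition} of two continuation maps are gluing statements. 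The paper avoids gluing by a different logic: it argues by contradiction under \eqref{nobreak}, so that every moduli space consists of flow lines doubly asymptotic to the one non-degenerate chord; then no breaking can occur, the one-manifold $\mathcal{N}_R$ in Proposition~\ref{fast} is compact, parity of its boundary forces $\mathcal{M}_0^R\neq\emptyset$ for every $R$, and stretching $R\to\infty$ produces a critical point of $\mathcal{A}^{H_{\mu_1}}$ near $\Omega$, contradicting (ii) directly---no composition of continuation maps is ever formed. Second, your isolation statement controls \emph{chords} on $\partial U'$ only; it does not prevent gradient-flow lines from leaving $U'$ and returning. That confinement is the real analytic content of the paper's proof: Lemma~\ref{epsi} (a lower bound $\epsilon$ for $\|\nabla\mathcal{A}^{H_{\mu_\infty}}\|$ on the annulus $A_{\rho_0}(\Omega)$, proved by an Arzela--Ascoli argument for \emph{almost}-critical points, which is strictly harder than for chords and needs properties (b),(c) of $\rho_0$), Lemma~\ref{bou2}, and Proposition~\ref{nichtaus1}, where the crossing cost $\epsilon\rho_0/6$ of \eqref{en2} is played against the total energy bound $2(\mu_1-\mu_0)\kappa c$ of \eqref{en1}, small precisely because the flow lines are doubly asymptotic to the same chord and $\mu_1-\mu_0$ is small. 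Without these estimates your ``Conley-type isolating neighborhood for the flow'' is an assumption, not a conclusion.

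Your closing ``coarser alternative'', by contrast, is a genuinely different and essentially correct route, more elementary than the paper's: it replaces flow-line moduli by the parametrized chord set itself. Since chords form the finite-dimensional intersection problem $\Phi^{-1}(\mathcal{L}_1)$ of Section 2, your step-1 isolation plus Arzela--Ascoli (both robust under small perturbations of the family, by the same limiting argument) make the chord set over $[\mu,\mu_\nu]$ a compact subset of $U'\times[\mu,\mu_\nu]$; a generic perturbation rel endpoints then makes it a compact one-manifold whose boundary is the union of the chord sets at the two ends, which under hypothesis (ii) and the contradiction hypothesis consists of exactly one, non-degenerate, point---impossible, since compact one-manifolds have an even number of boundary points. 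This bypasses Lemmas~\ref{epsi} and \ref{bou2} and Propositions~\ref{nichtaus1}, \ref{bubble}, \ref{nichtaus2} entirely. What the paper's flow-line argument buys in exchange is that it never invokes the finite-dimensional reduction peculiar to chords, so it is the version one could hope to adapt to periodic orbits, where (as the introduction notes) critical points are never isolated and an equivariant theory would be needed. If you write up your alternative with the two standard details made explicit---persistence of compactness and isolation under the perturbation, and parametrized transversality rel the already-transverse boundary---it gives a valid proof of Theorem~\ref{mainthm2}.
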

By definition of the Omega limit set we already know that for $\mu \in [0,\mu_\infty)$ close enough to $\mu_\infty$ the chord
$(v_\mu,\tau_\mu)$ belongs to $\mathcal{C}_\mu \cap U$. The theorem guarantees that there is a second chord on $\Sigma_\mu$ for $U$. 
This is clear intuitively by looking at the local Rabinowitz Floer homology for the isolated critical set $\Omega$. By hypothesis~(ii), the local Rabinowitz Floer homology is trivial. Now if $(v_\mu,\tau_\mu)$ were the only chord on $\Sigma_\mu$ for $U$, then the local Rabinowitz Floer homology is a one-dimensional vector space generated by
$(v_\mu,\tau_\mu)$, because $(v_\mu,\tau_\mu)$ is by definition non-degenerate.
From this contradiction we see that there have to be at least two elements in $\mathcal{C}_\mu \cap U$.
In the following we give a more elementary argument to prove Theorem~\ref{mainthm2} which does not use the full strength of local Rabinowitz Floer homology. In particular, it does not require any gluing construction, but is based on Floer's stretching method for time dependent gradient flow lines. 

\begin{figure}[htp]
\def\svgwidth{0.7\textwidth}%
\begingroup\endlinechar=-1
\resizebox{0.7\textwidth}{!}{%
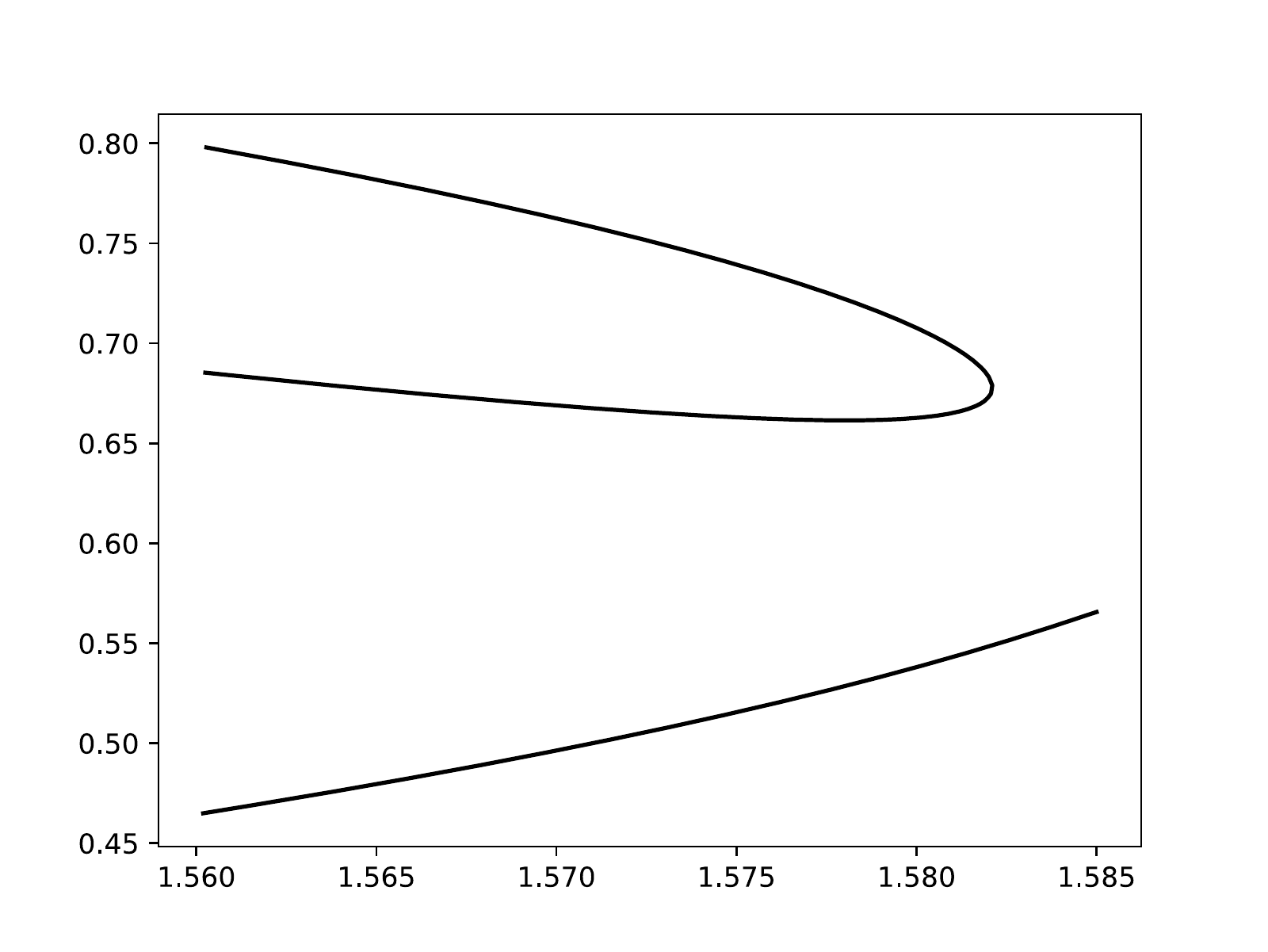%
}\endgroup
\caption{Two families of symmetric periodic orbits in RTBP for $\mu=10^{-3}$.
The family parameter is the Jacobi energy $c$. On the vertical axis, the starting point of the chord, parametrized by the $x$-axis.
The lower family is non-degenerate in the entire parameter range. The upper family becomes degenerate. 
}
  \label{fig:planar_RTBP}
\end{figure}

As preparation for the proof of Theorem~\ref{mainthm2} we choose an $\omega$-compatible almost complex structure $J$ on
$M$, meaning that
$$
g=\omega(\cdot,J\cdot)
$$
is a Riemannian metric on $M$. We use the Riemannian metric $g$ to define a metric on each connected component of $\mathcal{P}$ as follows. 
If $w_0, w_1 \in \mathcal{P}$ let 
$$\mathscr{P}_{w_0,w_1} \subset C^\infty([0,1],[0,1],M)$$
be the subspace consisting of all $w \in C^\infty([0,1],[0,1],M)$ satisfying the boundary conditions
$$ w(0,\cdot)=w_0,\,\,w(1,\cdot)=w_1,\quad 
w(0,s) \in L_0,\,\,w(1,s)\in L_1,\,\,s \in [0,1],$$
i.e., $\mathscr{P}_{w_0,w_1}$ consists of all paths in the pathspace $\mathcal{P}$ connecting $w_0$ and $w_1$.
If $\mathcal{P}_{w_0,w_1} \neq \emptyset$, i.e., $w_0$ and $w_1$ lie in the same connected component of $\mathcal{P}$, we define
$$
d_{\mathcal{P}}(w_0,w_1):=\inf_{w \in \mathscr{P}_{w_0,w_1}}\int_0^1 \sqrt{\int_0^1 |\partial_s w(s,t)|^2 dt} ds
=\inf_{w \in \mathscr{P}_{w_0,w_1}}\int_0^1||\partial_s w(s,\cdot)||_2 ds,
$$
where the norm $|\cdot|=|\cdot|_g$ on $TM$ is taken with respect to the metric $g$ and $||\cdot||_2$ is the $L^2$-norm
on $T \mathcal{P}$ induced from $|\cdot|_g$. This defines a metric on each connected component of $\mathcal{P}$. 
We endow each connected component of $\mathcal{P} \times (0,\infty)$ with the product metric
$$
d\big((w_0,\sigma_0),(w_1,\sigma_1)\big)=d_\mathcal{P}(w_0,w_1)+|\sigma_1-\sigma_0|.
$$
For $\rho>0$ we define the $\rho$-ball around $\Omega$ as
$$B_\rho(\Omega):=\big\{y \in \mathcal{P}\times(0,\infty): d(y,\Omega)<\rho\big\}.$$
\begin{lemma}\label{ball}
There exists $\rho>0$ with the property that
$$B_\rho(\Omega) \cap \mathcal{C}_{\mu_\infty}=\Omega,$$
i.e., the only chords contained in the $\rho$-ball around $\Omega$ are the ones contained in $\Omega$.
\end{lemma}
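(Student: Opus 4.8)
The plan is to argue by contradiction and reduce the statement to the compactness of $\Omega$ together with the Arzela-Ascoli compactness for chords already exploited in the proof of Theorem~A. Concretely, I would suppose that no such $\rho$ exists. Then for every $n \in \mathbb{N}$ the ball $B_{1/n}(\Omega)$ contains a chord $x_n=(w_n,\sigma_n) \in \mathcal{C}_{\mu_\infty}$ with $x_n \notin \Omega$ and $d(x_n,\Omega)<1/n$. The goal is to extract a limit of a subsequence of the $x_n$ that must lie in $\Omega$ and yet, by openness of $U$, forces $x_n$ itself into $\Omega$ for $n$ large, which contradicts $x_n \notin \Omega$.

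First I would control the periods. Since the product metric contains the term $|\sigma-\sigma'|$, and, by Theorem~A together with Proposition~\ref{bound}, the periods occurring in the compact set $\Omega$ lie in a fixed compact subinterval of $(0,\infty)$, the inequality $d(x_n,\Omega)<1/n$ forces the $\sigma_n$ to be uniformly bounded and uniformly bounded away from $0$. Note also that $d(x_n,\Omega)<\infty$ already places each $x_n$ in the single connected component of $\mathcal{P}\times(0,\infty)$ containing the connected set $\Omega$, so the distance is genuinely defined throughout.

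With the periods under control I would invoke Arzela-Ascoli exactly as in the first paragraph of the proof of Theorem~A: the chord equation $\partial_t w_n=\sigma_n X_{H_{\mu_\infty}}(w_n)$ together with compactness of $H^{-1}(0)$ gives equicontinuity, and bootstrapping the chord equation upgrades $C^0$-convergence to $C^\infty$-convergence. Passing to a subsequence I obtain a chord $x_\infty \in \mathcal{C}_{\mu_\infty}$ with $x_{n_j}\to x_\infty$ in $C^\infty$. Since $C^\infty$-convergence implies convergence in the $L^2$-flavored metric $d$ (one connects $x_{n_j}$ to $x_\infty$ by short interpolating paths, respecting the Lagrangian boundary conditions, whose energy is controlled by the $C^0$-distance), the triangle inequality gives $d(x_\infty,\Omega)\le d(x_\infty,x_{n_j})+d(x_{n_j},\Omega)\to 0$; as $\Omega$ is compact, hence closed, this yields $x_\infty \in \Omega$. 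Finally $x_\infty \in \Omega \subset U$ with $U$ open, so $x_{n_j}\in U$ for $j$ large, whence $x_{n_j}\in U\cap \mathcal{C}_{\mu_\infty}=\Omega$ by hypothesis~(i), the desired contradiction.

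The step I expect to be the main, albeit mild, obstacle is the interplay between the $L^2$-type metric $d$ and the $C^\infty$-topology in which $U$ is open and in which $\mathcal{C}_{\mu_\infty}$ is well behaved. The point to get right is that among chords of uniformly bounded period all of these topologies coincide, precisely because of the elliptic bootstrapping and Arzela-Ascoli argument; once this is observed, everything else is elementary point-set topology resting only on the compactness of $\Omega$ and the openness of $U$ furnished by hypothesis~(i).
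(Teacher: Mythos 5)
Your proposal is correct and follows essentially the same route as the paper's own proof: a contradiction argument with chords in shrinking balls around $\Omega$, period bounds from compactness of $\Omega$, Arzela--Ascoli via the chord equation to extract a limit chord in $\Omega$, and finally the openness of $U$ from hypothesis~(i) to force the approximating chords into $U \cap \mathcal{C}_{\mu_\infty}=\Omega$, contradicting their choice. Your added care about the interplay between the $L^2$-type metric $d$ and the $C^\infty$-topology, and about the limit actually lying in the closed set $\Omega$, merely fills in details the paper leaves implicit.
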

\begin{proof}
We argue by contradiction and assume that there exists a sequence $\rho_\nu>0$ converging to $0$ with the
property that there exists a chord 
$$y_\nu=(w_\nu,\sigma_\nu) \in B_{\rho_\nu}(\Omega)$$
not contained in $\Omega$. Because $\Omega$ is compact, it follows that the sequence $\sigma_\nu$ is uniformly bounded. By
the chord equation we conclude that the sequence $w_\nu$ is equicontinuous. Hence by the theorem of Arzela-Ascoli there exists
a chord
$$y=(w,\sigma) \in \Omega$$
and a subsequence $\nu_j$ such that
$$\lim_{j \to \infty}y_{\nu_j}=y.$$
Again by the chord equation we conclude that for $j$ large enough $y_{\nu_j} \in U$. This contradicts hypothesis (i)
and the lemma follows. 
\end{proof}

Note that the equation of a chord on $\Sigma_\mu=H_\mu^{-1}(0)$ only depends on the derivatives of $H$ on $\Sigma_\mu$; 
the behavior of $H$ on the rest of the symplectic manifold $M$ is irrelevant for the chord equation. Hence after
maybe throwing away some part of $M$ and maybe after replacing the interval $[0,1]$ by a closed subinterval we can assume
without loss of generality that there exists a constant $c>0$ with the properties that
\begin{equation}\label{bou}
|X_{H_\mu}(x)| \leq c, \quad |H'_\mu(x)|\leq c, \quad |X_{H'_\mu}(x)| \leq c, \quad x \in M
\end{equation}
where we have taken the norm of the tangent vector $|X_H(x)|$ with respect to the metric $g$ and moreover,
$$H^{-1}\big(\big[-\tfrac{1}{c},\tfrac{1}{c}\big]\big) \subset M \times [0,1]$$
is compact. Hence by Lemma~\ref{ball} we can choose $\rho_0>0$ with the following properties.
\begin{description}
 \item[(a)] $B_{\rho_0}(\Omega) \cap \mathcal{C}_{\mu_\infty}=\Omega$, 
 \item[(b)] For every $y=(w, \sigma) \in B_{\rho_0}(\Omega)$ there exists $\tau_0 \in [0,1]$ such that 
 $$|H_{\mu_\infty}(w(\tau_0))| \leq \tfrac{1}{2c},$$
 \item[(c)] $\rho_0 \leq \tfrac{1}{2}\min\big\{\sigma: (w,\sigma) \in \Omega\big\}$.
\end{description}
We next introduce the gradient of the Rabinowitz action functional. In order to do that we first need a Riemannian metric on
$T\big(\mathcal{P}\times (0,\infty)\big)$. Let $y=(w,\sigma) \in \mathcal{P}\times (0,\infty)$ and
$$\widehat{y}_1=(\widehat{w}_1,\widehat{\sigma}_1),\,\,\widehat{y}_2=(\widehat{w}_2,\widehat{\sigma}_2) \in 
T_y\big(\mathcal{P}\times (0,\infty)\big)=T_w \mathcal{P}\times \mathbb{R}.$$
Define
$$\langle \widehat{y}_1,\widehat{y}_2 \rangle:=\int_0^1 g\big(\widehat{w}_1(t),\widehat{w}_2(t)\big)dt+\widehat{\sigma}_1
\cdot \widehat{\sigma}_2=\int_0^1 \omega\big(\widehat{w}_1(t),J(w(t))\widehat{w}_2(t)\big)dt+\widehat{\sigma}_1
\cdot \widehat{\sigma}_2.$$
Note that the Riemannian metric $\langle\cdot,\cdot \rangle$ on $T\big(\mathcal{P}\times (0,\infty)\big)$ induces
up to equivalence the metric $d$ on $\mathcal{P} \times (0,\infty)$.

For any Hamiltonian $H \in C^\infty(M,\mathbb{R})$ the gradient of the Rabinowitz action functional 
$\nabla \mathcal{A}^H$ with respect to the metric $\langle\cdot, \cdot  \rangle$ 
at a point $y=(w,\sigma) \in \mathcal{P}\times (0,\infty)$ is implicitly defined by the condition
$$d\mathcal{A}^H(y) \widehat{y}=\big\langle \nabla \mathcal{A}^H(y),\widehat{y}\big\rangle, \quad \forall\,\,
\widehat{y}=(\widehat{w},\widehat{\sigma}) \in T_y\big(\mathcal{P}\times (0,\infty)\big).$$
From (\ref{rabdiff}) we infer that
\begin{eqnarray*}
d\mathcal{A}^H(y) \widehat{y}&=&\int_0^1 \omega\big(\widehat{w},\partial_t w-\sigma X_H(w)\big)-\widehat{\sigma}
\int_0^1 H(w)dt\\
&=&-\int_0^1 g\big(\widehat{w},J(w)(\partial_t w-\sigma X_H(w)\big)-\widehat{\sigma}
\int_0^1 H(w)dt.
\end{eqnarray*}
Hence with respect to the splitting $T_y\big(\mathcal{P}\times (0,\infty)\big)=T_w \mathcal{P}\times \mathbb{R}$ the
gradient of the Rabinowitz action functional becomes
$$\nabla \mathcal{A}^H(y)=-\left(\begin{array}{c}
J(w)(\partial_t w-\sigma X_H(w))\\
\int_0^1 H(w)dt
\end{array}\right).$$
We abbreviate by
$$A_{\rho_0}(\Omega):=\overline{B_{2\rho_0/3}(\Omega)} \setminus B_{\rho_0/3}(\Omega)=\big\{y \in \mathcal{P}\times (0,\infty): \rho_0/3 \leq d(y,\Omega) \leq 2\rho_0/3\big\}.$$
Using this notion we are now in position to state our next lemma.
\begin{lemma}\label{epsi}
There exists $\epsilon>0$ such that
$$||\nabla \mathcal{A}^{H_{\mu_\infty}}(y)||\geq \epsilon,\quad \forall\,\, y \in A_{\rho_0}(\Omega)$$
where the norm $||\cdot||$ is taken with respect to the metric $\langle \cdot,\cdot \rangle$.
\end{lemma}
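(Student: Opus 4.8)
The plan is to argue by contradiction using an Arzel\`a--Ascoli compactness argument entirely parallel to the one in the proof of Lemma~\ref{ball}. Suppose no such $\epsilon$ exists. Then there is a sequence $y_\nu=(w_\nu,\sigma_\nu)\in A_{\rho_0}(\Omega)$ with $\|\nabla\mathcal{A}^{H_{\mu_\infty}}(y_\nu)\|\to 0$. Writing $\xi_\nu:=\partial_t w_\nu-\sigma_\nu X_{H_{\mu_\infty}}(w_\nu)$ and reading off the two components of the gradient formula
\[
\nabla\mathcal{A}^{H}(y)=-\left(\begin{array}{c}J(w)\big(\partial_t w-\sigma X_H(w)\big)\\ \int_0^1 H(w)\,dt\end{array}\right),
\]
together with the fact that $J$ is a $g$-isometry, the smallness of the gradient yields the two pieces of information $\|\xi_\nu\|_{L^2}\to 0$ and $\left|\int_0^1 H_{\mu_\infty}(w_\nu)\,dt\right|\to 0$. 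I want to extract a $C^0$-convergent subsequence whose limit is a chord lying in $A_{\rho_0}(\Omega)$, hence at distance at least $\rho_0/3$ from $\Omega$, contradicting property (a).

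The main obstacle is to confine the paths $w_\nu$ to a fixed compact subset of the possibly noncompact manifold $M$, so that Arzel\`a--Ascoli applies; this is where the boundedness package \eqref{bou} enters. First, since $d(y_\nu,\Omega)\le 2\rho_0/3$ and $\Omega$ is compact with periods bounded below away from zero by property (c), the periods $\sigma_\nu$ lie in a fixed compact subinterval of $(0,\infty)$. Next I control the oscillation of the energy along $w_\nu$: using $dH_{\mu_\infty}(\cdot)=\omega(\cdot,X_{H_{\mu_\infty}})$ and $\omega(X_{H_{\mu_\infty}},X_{H_{\mu_\infty}})=0$ one gets $\tfrac{d}{dt}H_{\mu_\infty}(w_\nu)=\omega(\xi_\nu,X_{H_{\mu_\infty}}(w_\nu))$, so by \eqref{bou}
\[
\Big|\tfrac{d}{dt}H_{\mu_\infty}(w_\nu(t))\Big|\le c\,|\xi_\nu(t)|,\qquad \mathrm{osc}_{[0,1]}\,H_{\mu_\infty}(w_\nu)\le c\,\|\xi_\nu\|_{L^2}\to 0.
\]
Combining the vanishing oscillation with the vanishing mean value $\int_0^1 H_{\mu_\infty}(w_\nu)\,dt\to 0$ shows $\sup_t|H_{\mu_\infty}(w_\nu(t))|\to 0$; alternatively, property (b) already places one point of each $w_\nu$ in a compact energy shell and the oscillation bound spreads this to the whole path. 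Either way, for $\nu$ large the image $w_\nu([0,1])$ lies in the compact set $H_{\mu_\infty}^{-1}\big([-\tfrac{1}{c},\tfrac{1}{c}]\big)$. Finally, from $|\partial_t w_\nu|\le|\xi_\nu|+\sigma_\nu|X_{H_{\mu_\infty}}(w_\nu)|$ and \eqref{bou} the derivatives $\partial_t w_\nu$ are bounded in $L^2$, giving a uniform $\tfrac{1}{2}$-H\"older bound and hence equicontinuity. Arzel\`a--Ascoli then furnishes a subsequence with $w_{\nu_j}\to w$ in $C^0$ and $\sigma_{\nu_j}\to\sigma>0$.

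It remains to identify $y=(w,\sigma)$ as a chord. Since $\sigma_{\nu_j}X_{H_{\mu_\infty}}(w_{\nu_j})\to\sigma X_{H_{\mu_\infty}}(w)$ uniformly and $\xi_{\nu_j}\to 0$ in $L^2$, the derivatives $\partial_t w_{\nu_j}$ converge in $L^2$ to $\sigma X_{H_{\mu_\infty}}(w)$; together with the $C^0$-convergence of $w_{\nu_j}$ this identifies the distributional derivative, so $\partial_t w=\sigma X_{H_{\mu_\infty}}(w)$, and bootstrapping makes $w$ smooth. The estimate $\sup_t|H_{\mu_\infty}(w_\nu)|\to 0$ forces $H_{\mu_\infty}(w)\equiv 0$, while the boundary conditions $w_{\nu_j}(i)\in L_i$ pass to the $C^0$-limit, so $w(i)\in L_i\cap\Sigma_{\mu_\infty}=\mathcal{L}_i$. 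Hence $(w,\sigma)\in\mathcal{C}_{\mu_\infty}$. But $y\in A_{\rho_0}(\Omega)$ gives $\rho_0/3\le d(y,\Omega)$, so $y\notin\Omega$ even though $y\in B_{\rho_0}(\Omega)$, contradicting property (a). This contradiction proves the lemma. The only genuinely delicate step is the confinement of the images to a compact set; everything after the Arzel\`a--Ascoli extraction is routine bootstrapping.
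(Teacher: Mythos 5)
Your proof is correct and follows essentially the same strategy as the paper's: argue by contradiction, confine the paths $w_\nu$ to the compact set $H_{\mu_\infty}^{-1}([-1/c,1/c])$, establish equicontinuity from the $L^2$-smallness of $\partial_t w_\nu-\sigma_\nu X_{H_{\mu_\infty}}(w_\nu)$, apply Arzel\`a--Ascoli, and contradict Lemma~\ref{ball}. The only (harmless) variations are in the confinement step, where the paper anchors the path via property (b) and spreads the bound pointwise using $|\nabla H_{\mu_\infty}|\leq\kappa$ on the energy shell, while you primarily combine the vanishing oscillation (via the global bound \eqref{bou}) with the vanishing mean value $\int_0^1 H_{\mu_\infty}(w_\nu)\,dt\to 0$ coming from the Lagrange-multiplier component of the gradient, and in the final step, where you spell out the identification of the $C^0$-limit as a critical point (distributional derivative plus bootstrapping), a detail the paper asserts without elaboration.
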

\begin{proof}
We argue by contradiction and assume that there exists a sequence 
$$y_\nu=(w_\nu,\sigma_\nu) \in A_{\rho_0}(\Omega)$$
with the property that
$$||\nabla \mathcal{A}^{H_{\mu_\infty}}(y_\nu)|| \leq \tfrac{1}{\nu},\quad \nu \in \mathbb{N}.$$
To derive a contradiction we first show the following Claim.
\\ \\
\textbf{Claim\,1:} \emph{There exists $\nu_0 \in \mathbb{N}$ with the property that for every $\nu \geq \nu_0$ 
the image of the path $w_\nu$ is completely contained in the compact subset $H_{\mu_\infty}^{-1}([-1/c,1/c])$ of $M$.}
\\ \\
In order to prove the Claim we first observe that since $H_{\mu_\infty}^{-1}([-1/c,1/c]) \subset M$ is compact there
exists  $\kappa>0$ with the property that 
$$|\nabla H_{\mu_\infty}(x)| \leq \kappa, \quad x \in H_{\mu_\infty}^{-1}([-1/c,1/c]).$$
Here the gradient and the norm are taken with respect to the Riemannian metric $g$ on $TM$. By property (b) in the choice
of $\rho_0$ we know that there exists $\tau_0 \in [0,1]$ with the property that
$$
|H_{\mu_\infty}(w_\nu(\tau_0))| \leq \tfrac{1}{2c}.$$
Now suppose that the path $w_\nu$ is not contained entirely in the set $H_{\mu_\infty}^{-1}([-1/c,1/c])$. This means that there
exists $\tau_1 \in [0,1]$ with the property that 
$$|H_{\mu_\infty}(w_\nu(\tau_1))| \geq \tfrac{1}{c}.$$
We estimate
\begin{eqnarray*}
\frac{1}{2c}&\leq& \big|H_{\mu_\infty}(w_\nu(\tau_1))-H_{\mu_\infty}(w_\nu(\tau_0))\big|\\
&=&\bigg|\int_{\tau_0}^{\tau_1} \frac{d}{dt}H_{\mu_\infty}(w_\nu(t))dt\bigg|\\
&=&\bigg|\int_{\tau_0}^{\tau_1} dH_{\mu_\infty}(w_\nu(t)) \partial_t w_\nu(t)dt\bigg|\\
&=&\bigg|\int_{\tau_0}^{\tau_1} dH_{\mu_\infty}(w_\nu(t)) \big(\partial_t w_\nu(t)-\sigma_\nu X_{H_{\mu_\infty}}(w_\nu(t))\big)dt\bigg|\\
&=&\bigg|\int_{\tau_0}^{\tau_1} g\big(\nabla H_{\mu_\infty}(w_\nu(t)),\partial_t w_\nu(t)-\sigma_\nu X_{H_{\mu_\infty}}(w_\nu(t))\big)dt\bigg|\\
&\leq&\int_{\tau_0}^{\tau_1} \big|\nabla H_{\mu_\infty}(w_\nu(t))\big| \cdot \big|\partial_t w_\nu(t)-\sigma_\nu X_{H_{\mu_\infty}}(w_\nu(t))\big|dt\\
&\leq&\kappa \int_{\tau_0}^{\tau_1}\big|\partial_t w_\nu(t)-\sigma_\nu X_{H_{\mu_\infty}}(w_\nu(t))\big|dt\\
&\leq&\kappa \big|\big|\partial_t w_\nu(t)-\sigma_\nu X_{H_{\mu_\infty}}(w_\nu(t))\big|\big|_1\\
&\leq&\kappa \big|\big|\partial_t w_\nu(t)-\sigma_\nu X_{H_{\mu_\infty}}(w_\nu(t))\big|\big|_2\\
&\leq& \kappa ||\nabla \mathcal{A}^{H_{\mu_\infty}}(y_\nu)||\\
&\leq& \frac{\kappa}{\nu}.
\end{eqnarray*} 
In view of this estimate the assertion of Claim\,1 follows by choosing
$$\nu_0 \geq 2 \kappa c.$$
\textbf{Claim\,2:} \emph{The sequence $w_\nu$ is equicontinuous.}
\\ \\
Let $\nu_0 \in \mathbb{N}$ be as in Claim\,1. It suffices to check equicontinuity for $\nu \geq \nu_0$. Note that in view of
Claim\,1 it follows that 
$$|X_{H_{\mu_\infty}}(w_\nu(t))|=|\nabla H_{\mu_\infty}(w_\nu(t))| \leq \kappa,\quad t \in [0,1].$$
Note further that
\begin{equation}\label{sbound}
\sigma_\nu \leq \max\big\{\sigma:(w,\sigma) \in \Omega\big\}+\frac{2 \rho_0}{3}=:\kappa_1.
\end{equation}
Given $0 \leq t_0<t_1 \leq 1$ we estimate
\begin{eqnarray*}
|w_\nu(t_1)-w_\nu(t_0)|&\leq&\int_{t_0}^{t_1}|\partial_t w_\nu(t)|dt\\
&\leq&\int_{t_0}^{t_1}|\partial_t w_\nu(t)-\sigma_\nu X_{H_{\mu_\infty}}(w_\nu(t))|dt\\
& &+\int_{t_0}^{t_1}|\sigma_\nu X_{H_{\mu_\infty}}(w_\nu(t))|dt\\
&\leq&\sqrt{t_1-t_0}\sqrt{\int_{t_0}^{t_1}|\partial_t w_\nu(t)-\sigma_\nu X_{H_{\mu_\infty}}(w_\nu(t))|^2dt}\\
& &+(t_1-t_0)\kappa_1 \kappa\\
&\leq&\sqrt{t_1-t_0}\big|\big|\partial_t w_\nu(t)-\sigma_\nu X_{H_{\mu_\infty}}(w_\nu(t))\big|\big|_2+(t_1-t_0)\kappa_1 \kappa\\
&\leq&\sqrt{t_1-t_0}\big|\big|\nabla \mathcal{A}^{H_{\mu_\infty}}(y_\nu\big|\big|+(t_1-t_0)\kappa_1 \kappa\\
&\leq&\frac{\sqrt{t_1-t_0}}{\nu}+(t_1-t_0)\kappa_1 \kappa\\
&\leq&\frac{\sqrt{t_1-t_0}}{\nu_0}+(t_1-t_0)\kappa_1 \kappa.
\end{eqnarray*}
This proves that the sequence $w_\nu$ is equicontinuous and establishes the truth of Claim\,2.
\\ \\
Claim\,1 and Claim\,2 together with the uniform bound in (\ref{sbound}) allow us to apply the Theorem of Arzela-Ascoli. That means that there exists a subsequence $\nu_j$ and 
$$y \in A_{\rho_0}(\Omega)$$
such that
$$\lim_{j \to \infty}y_{\nu_j}=y.$$
In particular,
$$\nabla \mathcal{A}^{H_{\mu_\infty}}(y)=0$$
or equivalently
$$d\mathcal{A}^{H_{\mu_\infty}}(y)=0.$$
Hence $y$ is a critical point of the Rabinowitz action functional and therefore by Proposition~\ref{crit} it follows that $y$ is a chord lying in
$$A_{\rho_0}(\Omega) \subset B_{\rho_0}(\Omega) \setminus \Omega.$$
This contradicts Lemma~\ref{ball}. The proof is complete. 
\end{proof} 

Because by \eqref{bou} the quantities $|H'_\mu|$ and $|X_{H'_\mu}|$ are uniformly bounded, we obtain the following result.
\begin{lemma}\label{bou2}
Let $\epsilon$ be as in Lemma~\ref{epsi}. Then there exists $\delta_0>0$ with the property that 
$$||\nabla \mathcal{A}^{H_\mu}(y)|| \geq \frac{\epsilon}{2},\qquad \mu \in [\mu_\infty-\delta_0,\mu_\infty+\delta_0],\quad
y \in A_{\rho_0}(\Omega).$$
\end{lemma}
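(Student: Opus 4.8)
The plan is to compare $\nabla\mathcal{A}^{H_\mu}(y)$ with $\nabla\mathcal{A}^{H_{\mu_\infty}}(y)$ and to show that their difference is small, uniformly for $y\in A_{\rho_0}(\Omega)$, once $\mu$ is close to $\mu_\infty$; the assertion then follows from Lemma~\ref{epsi} by the reverse triangle inequality. Using the explicit expression for the gradient computed above, at $y=(w,\sigma)$ the difference $\nabla\mathcal{A}^{H_\mu}(y)-\nabla\mathcal{A}^{H_{\mu_\infty}}(y)$ has first component $\sigma\,J(w)\big(X_{H_\mu}(w)-X_{H_{\mu_\infty}}(w)\big)$ (the terms $\partial_t w$ cancel) and second component $\int_0^1\big(H_\mu(w)-H_{\mu_\infty}(w)\big)\,dt$. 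Since $J$ is $g$-orthogonal, the first component contributes exactly $\int_0^1\sigma^2\big|X_{H_\mu}(w)-X_{H_{\mu_\infty}}(w)\big|^2\,dt$ to the square of the norm.

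Next I would control the two pointwise differences in $\mu$. Because the assignment $H\mapsto X_H$ is linear, the curve $\mu\mapsto X_{H_\mu}$ is differentiable with $\partial_\mu X_{H_\mu}=X_{H'_\mu}$, so the fundamental theorem of calculus gives $X_{H_\mu}(x)-X_{H_{\mu_\infty}}(x)=\int_{\mu_\infty}^\mu X_{H'_s}(x)\,ds$ and likewise $H_\mu(x)-H_{\mu_\infty}(x)=\int_{\mu_\infty}^\mu H'_s(x)\,ds$. The global bounds $|X_{H'_\mu}|\le c$ and $|H'_\mu|\le c$ from \eqref{bou} then yield $\big|X_{H_\mu}(x)-X_{H_{\mu_\infty}}(x)\big|\le c\,|\mu-\mu_\infty|$ and $\big|H_\mu(x)-H_{\mu_\infty}(x)\big|\le c\,|\mu-\mu_\infty|$ for all $x\in M$. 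It is essential here that, in contrast to the proof of Lemma~\ref{epsi}, these bounds hold on all of $M$, so that no separate compactness argument confining the image of $w$ to a sublevel set is required.

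It remains only to bound $\sigma$. For $y=(w,\sigma)\in A_{\rho_0}(\Omega)\subset\overline{B_{2\rho_0/3}(\Omega)}$, the product form of the metric $d$ shows that the $\sigma$-coordinate differs by at most $2\rho_0/3$ from that of a nearby element of $\Omega$, so $\sigma\le\kappa_1:=\max\{\sigma:(w,\sigma)\in\Omega\}+\tfrac{2\rho_0}{3}$, exactly the bound recorded in \eqref{sbound}. Combining the three estimates, the square norm of the difference is at most $(\kappa_1^2+1)\,c^2\,|\mu-\mu_\infty|^2$, whence
$$\big\|\nabla\mathcal{A}^{H_\mu}(y)-\nabla\mathcal{A}^{H_{\mu_\infty}}(y)\big\|\le c\sqrt{\kappa_1^2+1}\,\,|\mu-\mu_\infty|$$
uniformly in $y\in A_{\rho_0}(\Omega)$. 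By the reverse triangle inequality together with Lemma~\ref{epsi} we obtain
$$\big\|\nabla\mathcal{A}^{H_\mu}(y)\big\|\ge\epsilon-c\sqrt{\kappa_1^2+1}\,\,|\mu-\mu_\infty|,$$
so the lemma follows by choosing $\delta_0:=\epsilon\big/\big(2c\sqrt{\kappa_1^2+1}\big)$, shrunk if necessary so that $[\mu_\infty-\delta_0,\mu_\infty+\delta_0]\subset[0,1]$. The computation is entirely elementary; the only point demanding care is the uniformity over $y\in A_{\rho_0}(\Omega)$, and this is precisely what the global bounds in \eqref{bou} and the period bound \eqref{sbound} provide.
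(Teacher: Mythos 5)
Your proof is correct and follows exactly the route the paper intends: the paper states Lemma~\ref{bou2} without proof, as an immediate consequence of the uniform bounds in \eqref{bou}, and your argument simply supplies the details of that one-line justification (difference of gradients controlled by $c\,|\mu-\mu_\infty|$ via the explicit gradient formula, the $J$-isometry, the period bound \eqref{sbound}, then Lemma~\ref{epsi} plus the reverse triangle inequality). Nothing in your write-up deviates from or adds to the paper's implicit argument; it is a faithful, correctly executed expansion of it.
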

We next introduce moduli spaces of gradient flow lines of time dependent Rabinowitz action functionals. Choose a cutoff function
$$
\beta \in C^\infty(\mathbb{R},[0,1])
$$
satisfying
$$
\beta'(s) \geq 0,\,\,s \leq 0,\qquad \beta'(s) \leq 0,\,\,s \geq 0,\qquad \beta(s)=0,\,\,|s| \geq T
$$
for some  $T=T(\beta)>0$. For 
$$0 \leq \mu_0<\mu_\infty<\mu_1 \leq 1$$ 
such that $\mu_0$ is close enough to $\mu_\infty$ in the sense that 
$$
(v_{\mu_0},\tau_{\mu_0}) \in B_{\rho_0}(\Omega)
$$
we introduce the time dependent family of Rabinowitz action functionals
$$\mathcal{A}_{\beta,\mu_0,\mu_1} \colon \mathcal{P}\times (0,\infty)\times \mathbb{R}\to \mathbb{R}, \quad 
(y,s) \mapsto \mathcal{A}^{H_{\mu_0+\beta(s)(\mu_1-\mu_0)}}(y).$$
We introduce the moduli space of gradient flow lines of $\mathcal{A}_{\beta,\mu_0,\mu_1}$ contained in
$B_{\rho_0}(\Omega)$ doubly asymptotic to
the chord $(v_{\mu_0},\tau_{\mu_0})$, i.e., 
$$\mathcal{M}(\beta,\mu_0,\mu_1) \subset C^\infty\big(\mathbb{R},B_{\rho_0}(\Omega)\big)$$
consists of all $y \colon \mathbb{R} \to B_{\rho_0}(\Omega)$ satisfying 
$$\partial_s y(s)+\nabla \mathcal{A}_{\beta,\mu_0,\mu_1}\big(y(s),s\big)=0,\qquad \lim_{s \to \pm \infty}y(s)=(v_{\mu_0},\tau_{\mu_0}).$$
\begin{prop}\label{nichtaus1}
There exists $\delta_1>0$ with the following property. If 
$$\mu_\infty-\delta_1\leq \mu_0<\mu_\infty<\mu_1<\mu_\infty+\delta_1$$
and $y \in \mathcal{M}(\beta,\mu_0,\mu_1)$ then
$$d\big(y(s),\Omega\big) \leq \frac{2\rho_0}{3},\quad s \in \mathbb{R}.$$
\end{prop}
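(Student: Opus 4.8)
The plan is to show that a gradient flow line $y \in \mathcal{M}(\beta,\mu_0,\mu_1)$ cannot escape into the annular region $A_{\rho_0}(\Omega)$ where, by Lemma~\ref{bou2}, the gradient is bounded below. The key mechanism is an energy estimate. For a solution of the time-dependent gradient equation $\partial_s y + \nabla \mathcal{A}_{\beta,\mu_0,\mu_1}(y,s)=0$ that is doubly asymptotic to the same chord $(v_{\mu_0},\tau_{\mu_0})$, the total energy $\int_{-\infty}^\infty \|\partial_s y(s)\|^2 ds$ equals the difference of the action at the two ends plus a correction coming from the explicit $s$-dependence of the functional. Since both asymptotic limits are the same chord, the action difference vanishes, and what remains is controlled by $\int_{-\infty}^\infty \beta'(s)(\mu_1-\mu_0)\, \mathcal{A}^{H'_\mu}(y(s))\,ds$. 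Using \eqref{bou} and the fact that $\beta$ is supported in $[-T,T]$, this correction term is bounded by $C(\mu_1-\mu_0)$ for some constant $C$ depending only on $T$, $\rho_0$, and the uniform bounds, hence can be made as small as we like by shrinking $\delta_1$.

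First I would write out the fundamental identity $\frac{d}{ds}\mathcal{A}_{\beta,\mu_0,\mu_1}(y(s),s) = d\mathcal{A}(y(s))\partial_s y(s) + \partial_s \mathcal{A}_{\beta,\mu_0,\mu_1}(y(s),s)$, where the first term equals $-\|\partial_s y(s)\|^2$ along the flow and the second is the explicit $s$-derivative $\beta'(s)(\mu_1-\mu_0)\,\frac{\partial}{\partial\mu}\mathcal{A}^{H_\mu}(y(s))$. Integrating over $\mathbb{R}$ and using that the two asymptotic ends agree yields
$$
\int_{-\infty}^\infty \|\partial_s y(s)\|^2\,ds = (\mu_1-\mu_0)\int_{-\infty}^\infty \beta'(s)\,\mathcal{A}^{H'_{\mu(s)}}(y(s))\,ds.
$$
Here $\mathcal{A}^{H'_\mu}(y) = -\sigma\int_0^1 H'_\mu(w)\,dt$, which is bounded using the uniform bound on $|H'_\mu|$ from \eqref{bou} and the bound on $\sigma$ valid in $B_{\rho_0}(\Omega)$. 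Thus the energy is at most $C(\mu_1-\mu_0)$.

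Next I would argue by contradiction. Suppose $y(s^*) \in A_{\rho_0}(\Omega)$ for some $s^*$, i.e. $\rho_0/3 \leq d(y(s^*),\Omega) \leq 2\rho_0/3$. Since $y(\pm\infty)=(v_{\mu_0},\tau_{\mu_0}) \in B_{\rho_0}(\Omega)$ lies within distance less than $\rho_0/3$ of $\Omega$ (after shrinking, as $(v_{\mu_0},\tau_{\mu_0})\to\Omega$ when $\mu_0\to\mu_\infty$), the flow line must traverse the annulus $A_{\rho_0}(\Omega)$, spending a segment of $s$-values on which $d(y(s),\Omega)$ increases from $\rho_0/3$ to $2\rho_0/3$ or decreases back. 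On any such crossing segment the path length is at least $\rho_0/3$, so by Cauchy--Schwarz the energy over that segment is bounded below in terms of $\rho_0$ and a lower bound on the $s$-length of the segment. The lower bound on the $s$-length comes from Lemma~\ref{bou2}: since $\|\nabla\mathcal{A}^{H_\mu}(y(s))\|=\|\partial_s y(s)\|$ and this is both bounded below by $\epsilon/2$ (so the path cannot cross instantly) and the total length of the traverse is at least $\rho_0/3$, one obtains a quantitative lower bound on the energy of the form $c_0(\rho_0,\epsilon)>0$ independent of $\mu_0,\mu_1$. Choosing $\delta_1$ small enough that $C(\mu_1-\mu_0) < c_0$ yields a contradiction.

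The main obstacle I anticipate is making the crossing argument quantitative without invoking compactness-of-moduli-space machinery: one must ensure the flow line genuinely enters the annulus from inside $B_{\rho_0/3}(\Omega)$ rather than starting outside it, which forces the preliminary reduction that $(v_{\mu_0},\tau_{\mu_0})$ lies deep inside $B_{\rho_0/3}(\Omega)$ for $\mu_0$ close to $\mu_\infty$. The second delicate point is that the energy identity is an integral over all of $\mathbb{R}$, while the lower bound is harvested only from the annular crossing; I must confirm that entering $A_{\rho_0}(\Omega)$ at all forces at least one full traverse of width $\rho_0/3$ in distance, and that the uniform gradient bound of Lemma~\ref{bou2} converts this distance into a definite amount of energy that beats the upper bound $C(\mu_1-\mu_0)$.
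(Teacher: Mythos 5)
Your overall strategy is the paper's: bound the total energy above by a multiple of $\mu_1-\mu_0$ using the double asymptotics and the explicit $s$-dependence of the functional, bound it below by a fixed positive amount when the flow line crosses the annulus $A_{\rho_0}(\Omega)$ using Lemma~\ref{bou2}, and shrink $\delta_1$ until the two bounds are incompatible. Your energy identity and the resulting upper bound are correct (one remark: the constant does not actually depend on $T$, since $\int|\beta'|\,ds\leq 2$ for any admissible cutoff; this independence matters later, when the proposition is applied to the family $\beta_R$ with $T\to\infty$).

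However, your contradiction hypothesis negates the wrong statement, and this is exactly the point you flag at the end without resolving. You suppose only that $y(s^*)\in A_{\rho_0}(\Omega)$ for some $s^*$, i.e.\ that the flow line touches the annulus, and then assert that it ``must traverse the annulus.'' That implication is false: a flow line whose ends lie in $B_{\rho_0/3}(\Omega)$ can enter the annulus up to distance, say, $\rho_0/2$ from $\Omega$ and return without ever completing a traverse, and such an excursion costs arbitrarily little energy, so no contradiction with the upper bound can be derived. Note also that the proposition does not claim flow lines avoid the annulus --- its conclusion permits $d(y(s),\Omega)$ up to $2\rho_0/3$ --- so what you are implicitly trying to prove is stronger than the statement and is not accessible to the energy method. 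The correct negation is: there exists $s^*$ with $d(y(s^*),\Omega)>2\rho_0/3$. Since both asymptotic ends lie in $B_{\rho_0/3}(\Omega)$ (after shrinking $\delta_1$, as you do), continuity then yields $s_0<s_1$ with $d(y(s_0),\Omega)=\rho_0/3$, $d(y(s_1),\Omega)=2\rho_0/3$, and $y(s)\in A_{\rho_0}(\Omega)$ for all $s\in[s_0,s_1]$; this full traverse is what feeds the lower bound. With that fixed, you should also streamline the crossing estimate: your Cauchy--Schwarz step asks for a ``lower bound on the $s$-length,'' which is backwards (Cauchy--Schwarz would need an upper bound on $s_1-s_0$). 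No such detour is needed: since $\|\partial_s y\|=\|\nabla\mathcal{A}_{\beta,\mu_0,\mu_1}\|\geq \epsilon/2$ on the crossing segment,
$$
\int_{s_0}^{s_1}\big\|\partial_s y(s)\big\|^2 ds \;\geq\; \frac{\epsilon}{2}\int_{s_0}^{s_1}\big\|\partial_s y(s)\big\|\,ds \;\geq\; \frac{\epsilon}{2}\, d\big(y(s_0),y(s_1)\big)\;\geq\;\frac{\epsilon\rho_0}{6},
$$
which together with the upper bound $2(\mu_1-\mu_0)\kappa c\leq 4\delta_1\kappa c$ is impossible once $\delta_1<\epsilon\rho_0/(24\kappa c)$. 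This corrected version is precisely the paper's argument.
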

\begin{proof}
Let $y=(w,\sigma) \in \mathcal{M}(\beta,\mu_0,\mu_1)$. We estimate the energy of $y$. For that purpose we abbreviate by
$\mathcal{A}'_{\beta,\mu_0,\mu_1}$ the derivative with respect to the second variable. We further introduce the constant
$$\kappa:=\max\{r: (u,r) \in \Omega\}+\rho_0.$$
Note that
$$\sigma(s) \leq \kappa,\quad \forall\,\,s \in \mathbb{R}.$$
Using that $y$ is doubly asymptotic to the same critical point of the Rabinowitz action functional $\mathcal{A}^{H_{\mu_0}}$ we obtain
\begin{eqnarray*}
0&=&\lim_{s \to \infty}\mathcal{A}_{\beta,\mu_0,\mu_1}\big(y(s),s\big)-\lim_{s\to-\infty}\mathcal{A}_{\beta,\mu_0,\mu_1}\big(y(s),s\big)\\
&=&\int_{-\infty}^\infty \frac{d}{ds}\mathcal{A}_{\beta,\mu_0,\mu_1}\big(y(s),s\big)ds\\
&=&\int_{-\infty}^\infty \mathcal{A}'_{\beta,\mu_0,\mu_1}\big(y(s),s\big)ds+\int_{-\infty}^\infty d\mathcal{A}_{\beta,\mu_0,\mu_1}\big(y(s),s\big)\partial_s y(s)ds\\
&=&(\mu_0-\mu_1)\int_{-T}^T \sigma(s)\beta'(s) H'_{\mu_0+\beta(s)(\mu_1-\mu_0)}(w(s))ds\\
& &-\int_{-\infty}^\infty
\big|\big|\nabla \mathcal{A}_{\beta,\mu_0,\mu_1}\big(y(s),s\big)\big|\big|^2ds.
\end{eqnarray*}
This gives the inequality
\begin{eqnarray}\label{en1}
\int_{-\infty}^\infty
\big|\big|\nabla \mathcal{A}_{\beta,\mu_0,\mu_1}\big(y(s),s\big)\big|\big|^2ds
&\leq& (\mu_1-\mu_0)\kappa c\int_{-T}^T |\beta'(s)| ds\\ \nonumber
&\leq& 2(\mu_1-\mu_0)\kappa c.
\end{eqnarray}
Assume that $\delta_1 \leq \delta_0$ where $\delta_0$ is as in Lemma~\ref{bou2} and where $\delta_1$ is so small such that for every
$\mu \in [\mu_\infty-\delta_1,\mu_\infty)$ it holds that 
$$(v_\mu,\tau_\mu) \in B_{\rho_0/3}(\Omega).$$
Choose $\mu_1 \in [\mu_\infty,\mu_\infty+\delta_1)$ and suppose that $y \in \mathcal{M}(\beta,\mu_0,\mu_1)$ which
has the property that $y$ is not contained for all times in the closed ball of radius $2\rho_0/3$ around $\Omega$.
In view of the asymptotic behavior of $y$ this implies that there exist $s_0<s_1$ satisfying
$$d\big(y(s_0),\Omega\big)=\frac{\rho_0}{3},\quad d\big(y(s_1\big),\Omega)=\frac{2\rho_0}{3},\quad
\frac{\rho_0}{3} \leq d\big(y(s),\Omega\big)\leq \frac{2\rho_0}{3},\,\,s \in [s_0,s_1].$$
We estimate using Lemma~\ref{bou2}
\begin{eqnarray}\label{en2}
\int_{-\infty}^\infty
\big|\big|\nabla \mathcal{A}_{\beta,\mu_0,\mu_1}\big(y(s),s\big)\big|\big|^2ds
&\geq&\int_{s_0}^{s_1}
\big|\big|\nabla \mathcal{A}_{\beta,\mu_0,\mu_1}\big(y(s),s\big)\big|\big|^2ds\\ \nonumber
&\geq&\frac{\epsilon}{2}\int_{s_0}^{s_1}
\big|\big|\nabla \mathcal{A}_{\beta,\mu_0,\mu_1}\big(y(s),s\big)\big|\big|ds\\ \nonumber
&=&\frac{\epsilon}{2}\int_{s_0}^{s_1}
\big|\big|\partial_s y(s)\big|\big|ds\\ \nonumber
&\geq&\frac{\epsilon}{2}d\big(y(s_1),y(s_0)\big)\\ \nonumber
&\geq&\frac{\epsilon \rho_0}{6}.
\end{eqnarray}
Combining the inequalities \eqref{en1} and \eqref{en2} we obtain the inequality
$$\frac{\epsilon \rho_0}{6} \leq 2(\mu_1-\mu_0)\kappa c \leq 4 \delta_1 \kappa c.$$
Hence by choosing
$$\delta_1<\frac{\epsilon \rho_0}{24\kappa c}$$
this inequality cannot be true anymore and therefore $y$ has to stay in the closed $2\rho_0/3$-ball around $\Omega$.
This finishes the proof of the proposition. 
\end{proof}

Given a cutoff function $\beta$ and $0 \leq \mu_0 <\mu_\infty<\mu_1 \leq 1$ we introduce the following
subset of 
$$\mathcal{M}_0(\beta,\mu_0,\mu_1) \subset \mathcal{M}(\beta,\mu_0,\mu_1)$$
consisting of all $y=(w,\sigma) \in \mathcal{M}(\beta,\mu_0,\mu_1)$ satisfying 
$$|H_{\mu_0+\beta(s)(\mu_1-\mu_0)}(w(s,t))|
< \frac{1}{c},\quad \forall\,\,s \in \mathbb{R},\,\,t \in [0,1].$$
\begin{prop}\label{bubble}
Assume that $y_\nu=(w_\nu,\sigma_\nu)$ is a sequence in $\mathcal{M}_0(\beta,\mu_0,\mu_1)$. Then there exists a subsequence 
$\nu_j$ and
a gradient flow line $y$ of the time-dependent Rabinowitz action functional $\mathcal{A}_{\beta,\mu_0,\mu_1}$, i.e.,
a solution of
$$\partial_s y(s)+\nabla \mathcal{A}_{\beta,\mu_0,\mu_1}\big(y(s),s\big),\quad s \in \mathbb{R}$$
such that $y_{\nu_j}$ converges in the $C^\infty_{\mathrm{loc}}\big(\mathbb{R}\times [0,1],M\big) \times
C^\infty_{\mathrm{loc}}\big(\mathbb{R},(0,\infty)\big)$-topology to $y$.
\end{prop}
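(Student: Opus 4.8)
This is a standard Floer-theoretic compactness statement: a sequence of gradient flow lines confined to a region where the energy $H$ is uniformly bounded must subconverge in $C^\infty_{\mathrm{loc}}$ to a limiting flow line. The strategy is the usual one: first establish uniform local bounds on the $C^0$ and first-derivative data, then bootstrap via elliptic regularity for the Floer equation to get uniform $C^k_{\mathrm{loc}}$ bounds for all $k$, and finally apply Arzel\`a--Ascoli together with a diagonal argument over an exhaustion of $\mathbb{R}\times[0,1]$ by compact sets. The passage to the limit then identifies $y$ as a solution of the gradient equation by continuity.

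\textbf{Step 1 (uniform $C^0$ and energy bounds).}
First I would note that membership in $\mathcal{M}_0(\beta,\mu_0,\mu_1)$ forces $|H_{\mu_0+\beta(s)(\mu_1-\mu_0)}(w_\nu(s,t))|<1/c$ for all $s,t$, so the images of the $w_\nu$ lie in the compact set $H^{-1}([-1/c,1/c])$ (compact by the standing assumption after~\eqref{bou}). Hence the $w_\nu$ are uniformly bounded in $M$. Next I would derive a uniform energy bound exactly as in~\eqref{en1}: since each $y_\nu$ is a gradient flow line of $\mathcal{A}_{\beta,\mu_0,\mu_1}$ and the $\sigma$-component is trapped between the values coming from~(c) and~\eqref{sbound}, the integral $\int_{-\infty}^\infty \|\partial_s y_\nu(s)\|^2\,ds$ is bounded by $2(\mu_1-\mu_0)\kappa c$, uniformly in $\nu$. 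Together with the bounds $|X_{H_\mu}|\le c$ and $|H'_\mu|\le c$ from~\eqref{bou}, the gradient equation gives a uniform pointwise bound on $\partial_s w_\nu$ and $\partial_t w_\nu$ on any compact subset of $\mathbb{R}\times[0,1]$, controlling the $C^1$-norm locally.

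\textbf{Step 2 (elliptic bootstrapping and Arzel\`a--Ascoli).}
The Rabinowitz gradient equation for the $w$-component is a perturbed Cauchy--Riemann equation $\partial_s w + J(w)(\partial_t w-\sigma X_H(w))=0$, which is elliptic; the $\sigma$-component obeys the ODE $\partial_s\sigma = -\int_0^1 H(w)\,dt$. Given the uniform local $C^1$-bounds on $w_\nu$ and $C^0$-bounds on $\sigma_\nu$, I would invoke elliptic (Calder\'on--Zygmund / Schauder) estimates for the Floer equation to upgrade these to uniform $W^{k,p}_{\mathrm{loc}}$, hence $C^k_{\mathrm{loc}}$, bounds on every compact set, for all $k$. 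Bootstrapping then yields uniform $C^\infty_{\mathrm{loc}}$ bounds. Applying Arzel\`a--Ascoli on an exhaustion $K_1\subset K_2\subset\cdots$ of $\mathbb{R}\times[0,1]$ by compacta, and diagonalizing, I extract a subsequence $\nu_j$ converging in $C^\infty_{\mathrm{loc}}(\mathbb{R}\times[0,1],M)\times C^\infty_{\mathrm{loc}}(\mathbb{R},(0,\infty))$ to some limit $y=(w,\sigma)$. Passing to the limit in the gradient equation, which holds in $C^\infty_{\mathrm{loc}}$, shows that $y$ solves $\partial_s y(s)+\nabla\mathcal{A}_{\beta,\mu_0,\mu_1}(y(s),s)=0$.

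\textbf{The main obstacle.}
The crux of this kind of compactness result is normally \emph{bubbling}: in general a sequence of Floer trajectories can lose energy through the formation of pseudoholomorphic spheres or discs, and one must rule this out to obtain a genuine limiting flow line. Here, however, the proposition is deliberately phrased only as $C^\infty_{\mathrm{loc}}$-convergence, not as convergence in the stronger broken-trajectory sense, and the confinement to $B_{\rho_0}(\Omega)$ with uniformly bounded energy keeps everything in a fixed compact region of $M$. I expect the real work to be the uniform derivative bound needed to start the bootstrap: specifically, ruling out local blow-up of $|\partial_s w_\nu|+|\partial_t w_\nu|$. The standard device is a rescaling argument at a hypothetical sequence of points where the gradient blows up, producing a nonconstant finite-energy $J$-holomorphic plane (or disc, if the blow-up occurs at the Lagrangian boundary); exactness of $\lambda$ and of $L_0,L_1$ forces such a plane or disc to have zero symplectic area and hence to be constant, a contradiction. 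This is exactly where the exactness hypotheses $\omega=d\lambda$ and $\lambda|_{L_i}=0$, already in force throughout the paper, are used to exclude bubbling and guarantee the required local $C^1$-bound.
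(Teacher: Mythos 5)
Your proposal is correct and takes essentially the same route as the paper: the paper's own proof is a three-sentence sketch citing exactly your ingredients --- the uniform bound on $\sigma_\nu$, the fact that $w_\nu$ solves a perturbed Cauchy--Riemann equation of bounded energy with bounded perturbation, confinement of the image to the compact set $\bigcup_{\mu\in[0,1]}H_\mu^{-1}\big(\big[-\tfrac{1}{c},\tfrac{1}{c}\big]\big)$, and exactness of $\omega=d\lambda$ together with $\lambda|_{L_i}=0$ to exclude bubbling of holomorphic spheres and disks. Your expansion of the standard steps the paper leaves implicit (rescaling at hypothetical blow-up points, elliptic bootstrapping, Arzel\`a--Ascoli with a diagonal argument over an exhaustion) is the intended argument.
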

\begin{proof}
Note that $\sigma_\nu$ is uniformly bounded and therefore $w_\nu$ satisfies a perturbed Cauchy-Riemann
equation of bounded energy with bounded perturbation. Moreover because $y_\nu$ lies in $\mathcal{M}_0(\beta,\mu_0,\mu_1)$ its image is contained in the compact subset
$$\bigcup_{\mu \in [0,1]} H^{-1}_\mu\big(\big[-\tfrac{1}{c},\tfrac{1}{c}\big]\big)\subset M.$$
Because $\omega=d\lambda$ is exact and $\lambda$ vanishes on the Lagrangians $L_0$ and $L_1$, there is neither bubbling
of holomorphic spheres nor holomorphic disks. The proposition follows. 
\end{proof}

\begin{prop}\label{nichtaus2}
There exists $\delta_2>0$ with the following property. Assume that 
$$\mu_0 \in [\mu_\infty-\delta_2,\mu_\infty),\quad \mu_1 \in (\mu_\infty,\mu_\infty+\delta_2]$$
Then for every $y=(w,\sigma) \in \mathcal{M}_0(\beta,\mu_0,\mu_1)$ it holds that
$$|H_{\mu_0+\beta(s)(\mu_1-\mu_0)}(w(s,t))|
< \frac{1}{2c},\quad \forall\,\,s \in \mathbb{R},\,\,t \in [0,1].$$
\end{prop}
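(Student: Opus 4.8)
The plan is to argue by contradiction, using the compactness of gradient flow lines from Proposition~\ref{bubble} together with the fact that the energy of these flow lines tends to zero as $\mu_0,\mu_1 \to \mu_\infty$. Suppose no such $\delta_2$ exists. Taking $\delta_2=\tfrac{1}{\nu}$ and shrinking if necessary so that $\delta_2 \leq \delta_1$ (with $\delta_1$ from Proposition~\ref{nichtaus1}), we obtain sequences $\mu_0^\nu \nearrow \mu_\infty$, $\mu_1^\nu \searrow \mu_\infty$, gradient flow lines $y_\nu=(w_\nu,\sigma_\nu) \in \mathcal{M}_0(\beta,\mu_0^\nu,\mu_1^\nu)$, and points $(s_\nu,t_\nu) \in \mathbb{R}\times[0,1]$ with
$$\big|H_{\mu_\nu(s_\nu)}\big(w_\nu(s_\nu,t_\nu)\big)\big| \geq \frac{1}{2c}, \qquad \mu_\nu(s):=\mu_0^\nu+\beta(s)(\mu_1^\nu-\mu_0^\nu).$$
By Proposition~\ref{nichtaus1} every $y_\nu$ stays inside $\overline{B_{2\rho_0/3}(\Omega)}$. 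Since $0 \leq \beta \leq 1$ and $\mu_0^\nu<\mu_1^\nu$ we have $\mu_\nu(s) \in [\mu_0^\nu,\mu_1^\nu]$, so $\mu_\nu(s) \to \mu_\infty$ uniformly in $s$. The energy identity \eqref{en1} yields the uniform bound
$$\int_{-\infty}^\infty \big|\big|\nabla \mathcal{A}_{\beta,\mu_0^\nu,\mu_1^\nu}\big(y_\nu(s),s\big)\big|\big|^2 ds \leq 2(\mu_1^\nu-\mu_0^\nu)\kappa c,$$
where $\kappa$ and $c$ do not depend on $\nu$, so the energy tends to $0$ as $\nu \to \infty$.

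The main obstacle is that the violating points $s_\nu$ may run off to $\pm\infty$. To prevent this I center them: set $\beta_\nu:=\beta(\cdot+s_\nu)$ and $\tilde y_\nu(s):=y_\nu(s+s_\nu)$, so that $\tilde y_\nu$ is a gradient flow line of $\mathcal{A}_{\beta_\nu,\mu_0^\nu,\mu_1^\nu}$ with the violating point now at $s=0$. The image of $\tilde y_\nu$ still lies in $\overline{B_{2\rho_0/3}(\Omega)}$ and, by the definition of $\mathcal{M}_0$, in the compact set $\bigcup_{\mu \in [0,1]}H_\mu^{-1}([-\tfrac{1}{c},\tfrac{1}{c}])$, while its energy is unchanged by the translation and hence still tends to $0$. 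The compactness argument of Proposition~\ref{bubble} applies to this shifted sequence verbatim: the cutoffs $\beta_\nu$ are uniformly bounded, the perturbation stays bounded by \eqref{bou}, and exactness of $\omega=d\lambda$ together with $\lambda|_{L_i}=0$ rules out all bubbling. Passing to a subsequence we obtain $C^\infty_{\mathrm{loc}}$-convergence $\tilde y_{\nu_j} \to y=(w,\sigma)$ and, after a further subsequence, $t_{\nu_j} \to t_\infty$. Because $\mu_\nu(s+s_\nu) \to \mu_\infty$ for every fixed $s$, the limit $y$ solves the autonomous equation $\partial_s y+\nabla \mathcal{A}^{H_{\mu_\infty}}(y)=0$.

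It remains to identify $y$ and reach a contradiction. By shift-invariance of the energy integral and Fatou's lemma,
$$\int_{-\infty}^\infty \big|\big|\nabla \mathcal{A}^{H_{\mu_\infty}}\big(y(s)\big)\big|\big|^2 ds \leq \liminf_{j \to \infty}\int_{-\infty}^\infty \big|\big|\nabla \mathcal{A}_{\beta_{\nu_j},\mu_0^{\nu_j},\mu_1^{\nu_j}}\big(\tilde y_{\nu_j}(s),s\big)\big|\big|^2 ds \leq \liminf_{j \to \infty}2(\mu_1^{\nu_j}-\mu_0^{\nu_j})\kappa c=0,$$
so $\nabla \mathcal{A}^{H_{\mu_\infty}}(y(s))=0$ for all $s$, and therefore $\partial_s y \equiv 0$: the limit is a constant chord $y_*=(w_*,\sigma_*)$ on $\Sigma_{\mu_\infty}$. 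Since $C^\infty_{\mathrm{loc}}$-convergence gives $d(y_*,\Omega) \leq \tfrac{2\rho_0}{3}<\rho_0$, property~(a) in the choice of $\rho_0$ forces $y_* \in \Omega$, whence $H_{\mu_\infty}(w_*(t))=0$ for all $t \in [0,1]$. On the other hand, evaluating the violating points in the limit and using $\mu_{\nu_j}(s_{\nu_j}) \to \mu_\infty$ together with $\tilde w_{\nu_j}(0,t_{\nu_j}) \to w_*(t_\infty)$ gives
$$\big|H_{\mu_\infty}\big(w_*(t_\infty)\big)\big|=\lim_{j \to \infty}\big|H_{\mu_{\nu_j}(s_{\nu_j})}\big(\tilde w_{\nu_j}(0,t_{\nu_j})\big)\big| \geq \frac{1}{2c}>0,$$
which contradicts $H_{\mu_\infty}(w_*(t_\infty))=0$. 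Hence the required $\delta_2$ exists, and the proof is complete.
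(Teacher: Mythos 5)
Your proof is correct and follows essentially the same route as the paper: contradiction via sequences $\mu_0^\nu \nearrow \mu_\infty$, $\mu_1^\nu \searrow \mu_\infty$, time-shifting the flow lines to center the violating points, compactness as in Proposition~\ref{bubble}, the energy bound \eqref{en1} forcing the limit to be a constant chord in $\Omega$, and the contradiction $|H_{\mu_\infty}(w_*(t_\infty))| \geq \tfrac{1}{2c}$ against $w_*$ lying in $\Sigma_{\mu_\infty}$. Your added details (Fatou's lemma and the explicit appeal to Proposition~\ref{nichtaus1} and property~(a) to place the limit chord in $\Omega$) only make explicit steps the paper leaves implicit.
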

\begin{proof}
We argue by contradiction and assume instead that there exists sequences $\mu_0^\nu <\mu_\infty$
and $\mu_1^\nu>\mu_\infty$ satisfying
\begin{equation}\label{limi}
\lim_{\nu\to \infty}\mu_0^\nu=\mu_\infty=\lim_{\nu \to \infty}\mu_1^\nu
\end{equation}
such that there exists $y_\nu \in \mathcal{M}_0(\beta,\mu_0^\nu,\mu_1^\nu)$, $s_\nu \in \mathbb{R}$ and $t_\nu \in [0,1]$ such that
\begin{equation}\label{aus}
|H_{\mu^\nu_0+\beta(s_\nu)(\mu^\nu_1-\mu^\nu_0)}(w(s_\nu,t_\nu))|
\geq \frac{1}{2c}.
\end{equation}
We consider the sequence of time-shifted gradient flow lines
$$(s_\nu)_*y_\nu(s):=y_\nu(s+s_\nu),\quad s \in \mathbb{R}.$$
By the arguments in the proof of Proposition~\ref{bubble} and \eqref{limi} there exists a subsequence $\nu_j$ and
a gradient flow line $y$ of the Rabinowitz action functional
$$\mathcal{A}_{\beta,\mu_\infty,\mu_\infty}=\mathcal{A}^{H_{\mu_\infty}}$$
such that $(s_\nu)_* y_\nu$ converges to $y$. Note that in view of \eqref{limi} and the fact that $y_\nu$ is doubly
asymptotic to the same chord, the energy
$$
E\big((s_\nu)_*y_\nu\big)=E\big(y_\nu\big)=\int_{-\infty}^{\infty}\big|\big|\partial_s y_\nu(s)\big|\big|^2ds
$$
converges to zero as $\nu$ goes to infinity. This implies that $y=(w,\sigma)$ is just a constant gradient flow line of the
Rabinowitz action functional $\mathcal{A}^{H_\infty}$, i.e., $y$ is a critical point and therefore has to be
a chord in $\Omega$. Because the sequence $t_\nu$ is contained in the compact interval $[0,1]$ we can assume, maybe after going to a further subsequence, that there exists $t_\infty \in [0,1]$ such that
$$\lim_{j \to \infty}t_{\nu_j}=t_\infty.$$
Hence we infer from (\ref{aus}) that
$$|H_{\mu_\infty}(w(t_\infty))| \geq \frac{1}{2c}.$$
This contradicts the fact that the image of the chord $w$ is completely contained in $\Sigma_{\infty}=H^{-1}_{\mu_\infty}(0)$.
The proof of the Proposition is finished. 
\end{proof}

Choose $\delta_1>0$ as in Proposition~\ref{nichtaus1} and $\delta_2>0$ as in Proposition~\ref{nichtaus2} and set
$$\delta_3:=\min\{\delta_1,\delta_2\}.$$
\begin{prop}\label{fast}
Assume that $\mu_\infty-\delta_3<\mu_0<\mu_\infty<\mu_1<\mu_\infty+\delta_3$ and suppose that
\begin{equation}\label{nobreak}
B_{\rho_0}(\Omega) \cap \mathrm{crit}\mathcal{A}^{H_{\mu_0}}=\{(v_{\mu_0},\tau_{\mu_0})\}.
\end{equation}
Then $\mathrm{crit}\mathcal{A}^{H_{\mu_1}}\cap B_{\rho_0}(\Omega) \neq \emptyset$.
\end{prop}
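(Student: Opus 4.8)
The plan is to argue by contradiction via a continuation/homotopy argument for the gradient flow lines. Assume $\mathrm{crit}\,\mathcal{A}^{H_{\mu_1}}\cap B_{\rho_0}(\Omega)=\emptyset$, together with the standing hypothesis \eqref{nobreak} that the only critical point in $B_{\rho_0}(\Omega)$ for $\mu_0$ is the non-degenerate chord $(v_{\mu_0},\tau_{\mu_0})$. The strategy is to consider the moduli space $\mathcal{M}(\beta,\mu_0,\mu_1)$ of gradient flow lines doubly asymptotic to $(v_{\mu_0},\tau_{\mu_0})$ and to show that, on the one hand, it must be nonempty (there is at least the constant flow line obtained when $\mu_1\to\mu_0$, or more precisely a flow line produced by a continuation of the constant solution), while on the other hand the absence of critical points for $\mu_1$ forces these flow lines to escape the region $B_{\rho_0}(\Omega)$, contradicting Proposition~\ref{nichtaus1}.

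First I would set up the continuation. For $\mu_1$ very close to $\mu_0$ the cutoff function $\beta$ interpolates between the two functionals, and the constant path at the non-degenerate chord $(v_{\mu_0},\tau_{\mu_0})$ is an approximate solution; since the chord is non-degenerate, the linearized operator is surjective and the implicit function theorem produces a genuine element $y\in\mathcal{M}(\beta,\mu_0,\mu_1)$ close to the constant solution, hence with $d(y(s),\Omega)\le\tfrac{2\rho_0}{3}$ automatically. Alternatively, and more robustly, one sets up a parametrized moduli space over the homotopy parameter and uses that the signed count (or the mod-2 count) of flow lines is a homotopy invariant: when $\mu_1=\mu_0$ the count is $1$ because the only solution is the stationary one at the non-degenerate generator. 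I would phrase this as: the mod-2 count $\#_2\mathcal{M}(\beta,\mu_0,\mu_1)$ equals $1$ as long as the relevant moduli spaces are compact.

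The key analytic input is that Proposition~\ref{nichtaus1} keeps every $y\in\mathcal{M}(\beta,\mu_0,\mu_1)$ inside $\overline{B_{2\rho_0/3}(\Omega)}$, so the flow lines do not run out through the boundary of $B_{\rho_0}(\Omega)$; Proposition~\ref{nichtaus2} keeps the image in the compact level region so that $y\in\mathcal{M}_0$, and Proposition~\ref{bubble} then gives $C^\infty_{\mathrm{loc}}$-compactness without bubbling. Together with the asymptotic convergence to the fixed non-degenerate end $(v_{\mu_0},\tau_{\mu_0})$, this yields compactness of the moduli space up to breaking. Breaking can only occur along gradient trajectories between critical points inside $B_{\rho_0}(\Omega)$; for the functional $\mathcal{A}^{H_{\mu_1}}$ there are by assumption none, and for $\mathcal{A}^{H_{\mu_0}}$ the only critical point in the ball is $(v_{\mu_0},\tau_{\mu_0})$ itself. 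Hence the count is well-defined and invariant.

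To extract the contradiction I would compare with the situation where $\mu_1$ has been pushed past $\mu_\infty$: by hypothesis~(ii) (via the assumption $\mathrm{crit}\,\mathcal{A}^{H_{\mu_1}}\cap B_{\rho_0}(\Omega)=\emptyset$ that we are trying to refute) there is no critical point to serve as a limit, so a time-shifting/energy argument as in Proposition~\ref{nichtaus2} would force any limiting flow line to converge to a critical point in $B_{\rho_0}(\Omega)$ for $\mathcal{A}^{H_{\mu_1}}$, which does not exist, making $\mathcal{M}(\beta,\mu_0,\mu_1)$ empty; but invariance says its mod-2 count is $1$, a contradiction. Therefore $\mathrm{crit}\,\mathcal{A}^{H_{\mu_1}}\cap B_{\rho_0}(\Omega)\neq\emptyset$. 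The main obstacle I anticipate is the transversality and gluing-free bookkeeping needed to make ``the mod-2 count is a homotopy invariant'' rigorous in this Lagrangian Rabinowitz setting: one must verify that the linearized operators are Fredholm of index zero, that for generic data the moduli spaces are finite, and that the only boundary contributions to the one-parameter family are the broken configurations just described. The authors signal that they avoid gluing by using Floer's stretching method, so I expect the precise argument to package this invariance as a homotopy-of-homotopies between the identity continuation and one that is forced to be empty, with the energy estimate \eqref{en1} controlling compactness throughout.
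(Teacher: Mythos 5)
There is a genuine gap, and it sits exactly at the point where you try to extract the contradiction. Your nonemptiness step is fine and is essentially the paper's argument: the cobordism $\bigcup_{r}\mathcal{M}_0^r\times\{r\}$ is compact because \eqref{nobreak} forbids breaking, its boundary contains the single non-degenerate constant solution at $(v_{\mu_0},\tau_{\mu_0})$, and parity of boundary points of a compact $1$-manifold gives nonemptiness. But your contradiction step does not work for a \emph{fixed} cutoff $\beta$. By definition $\beta$ has compact support, so every $y\in\mathcal{M}(\beta,\mu_0,\mu_1)$ is doubly asymptotic to the $\mu_0$-chord $(v_{\mu_0},\tau_{\mu_0})$ and experiences the Hamiltonian $H_{\mu_1}$ only on a bounded $s$-interval. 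Consequently, the absence of critical points of $\mathcal{A}^{H_{\mu_1}}$ in $B_{\rho_0}(\Omega)$ forces neither of the two things you claim: it does not force flow lines to escape the ball (Proposition~\ref{nichtaus1} asserts precisely that they cannot), and it does not make $\mathcal{M}(\beta,\mu_0,\mu_1)$ empty --- indeed the paper proves $\mathcal{M}_0(\beta_R,\mu_0,\mu_1)\neq\emptyset$ for \emph{every} $R$ using only \eqref{nobreak}, with no hypothesis at all on $\mathrm{crit}\,\mathcal{A}^{H_{\mu_1}}$. So ``nonempty moduli space'' and ``no $\mu_1$-critical points in the ball'' are simply not in conflict at any finite cutoff, and your proof produces no contradiction.

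The missing idea is Floer's stretching, i.e.\ the one-parameter family $\beta_R$ in which the plateau where $\beta_R\equiv 1$ has length going to infinity as $R\to\infty$. Two uniformities make this work: the energy bound \eqref{en1} is uniform in $\beta$ (since $\int|\beta'|\le 2$ for every admissible cutoff), and Proposition~\ref{nichtaus1} confines the flow lines to $\overline{B_{2\rho_0/3}(\Omega)}$ uniformly in $R$. The paper then argues directly: pick $y_\nu\in\mathcal{M}_0^{R_\nu}$ with $R_\nu\to\infty$, extract a $C^\infty_{\mathrm{loc}}$-limit, which (because $\beta_{R_\nu}\equiv 1$ on any fixed compact $s$-interval for $\nu$ large) is an honest gradient flow line of $\mathcal{A}^{H_{\mu_1}}$ of finite energy trapped in the ball; its asymptotics as $s\to\infty$ yield the desired critical point. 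If you insist on a contradiction phrasing, the correct version is: were there no $\mu_1$-critical points in the closed ball, a Lemma~\ref{epsi}-type compactness argument would give $\|\nabla\mathcal{A}^{H_{\mu_1}}\|\geq\epsilon_1>0$ there, so the energy of any element of $\mathcal{M}_0^R$ would grow linearly in the plateau length $R$, violating the $R$-independent bound \eqref{en1} once $R$ is large; hence $\mathcal{M}_0^R=\emptyset$ for large $R$, contradicting the parity claim. Either way, the quantifier over the stretching parameter $R$ is indispensable, and it is exactly what is absent from your write-up.
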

\begin{proof}
Choose a smooth function $\gamma \in C^\infty(\mathbb{R},[0,1])$ satisfying 
$$\gamma' \geq 0,\quad \gamma(s) =0,\,\,s \leq -1,\quad \gamma(s)=1,\,\,s \geq 1.$$
Define a smooth one-parameter family of cutoff functions 
$$\beta_R \in C^\infty(\mathbb{R},[0,1]),\quad R \in [0,\infty)$$
as follows
$$\beta_R(s)=\left\{\begin{array}{cc}
R\gamma(2+s) & R \in [0,1],\,\,s\leq 0,\\
R\gamma(2-s) & R \in [0,1],\,\,s\geq 0,\\
\gamma(1+s+R) & R \geq 1,\,\,s \leq 0,\\
\gamma(1-s+R) & R \geq 1,\,\,s \geq 1.
\end{array}\right.$$
We now consider the one-parameter family of moduli spaces
$$\mathcal{M}_0^R:=\mathcal{M}_0(\beta_R,\mu_0,\mu_1),\quad R \in [0,\infty).$$
\textbf{Claim: } \emph{For every $R \in [0,\infty)$ the moduli space $\mathcal{M}_0^R$ is nonempty. }
\\ \\
In order to prove the Claim we consider the moduli space
$$\mathcal{N}_R:=\bigcup_{r \in [0,R]}\mathcal{M}_0^r\times \{r\}.$$
By assumption \eqref{nobreak}, gradient flow lines in $\mathcal{N}_R$ cannot break. Therefore Proposition~\ref{nichtaus1}
and Proposition~\ref{nichtaus2} combined with the proof of Proposition~\ref{bubble} show that the moduli space 
$\mathcal{N}_R$ is compact. After a small perturbation we can assume that it is a one-dimensional manifold with boundary, 
where the boundary is given by 
$$
\partial \mathcal{N}_R=\mathcal{M}_0^0 \times \{0\} \cup \mathcal{M}_0^R \times \{R\}.
$$
The moduli space $\mathcal{M}_0^0$ consists of gradient flow lines of the time-independent Rabinowitz action functional
$\mathcal{A}^{H_{\mu_0}}$ doubly asymptotic to the chord $(v_{\mu_0},\tau_{\mu_0})$. However, because of time-independence
such a gradient flow line has to be constant, i.e., is just given by the chord $(v_{\mu_0},\tau_{\mu_0})$. Hence
$\mathcal{M}_0^0$ consists just of a single point. Note further that, because $(v_{\mu_0},\tau_{\mu_0})$ is non-degenerate,
this boundary point of $\mathcal{N}_R$ is non-degenerate so that we actually do not need to perturb there to get a manifold
structure. However, a one dimensional manifold is a disjoint union of intervals and circles. In particular, the  number of
boundary points is even. This proves that $\mathcal{M}_0^R \neq \emptyset$ and establishes the truth of the Claim. 
\\ \\
Choose now a sequence $R_\nu$ converging to infinity. By the claim there exist gradient flow lines
$$y_\nu \in \mathcal{M}_0^{R_\nu}.$$ By Proposition~\ref{nichtaus1} and Proposition~\ref{nichtaus2} combined with the
proof of Proposition~\ref{bubble} there exists a subsequence $\nu_j$ and a gradient flow line $y=(w,\sigma)$ of the Rabinowitz action functional $\mathcal{A}^{H_{\mu_1}}$ whose image is contained in $B_{\rho_0}(\Omega)$ and moreover the image of
$w$ is contained in the compact subset $\bigcup_{\mu \in [0,1]}H^{-1}_\mu([-1/c,1/c])\subset M$ such that
$$\lim_{j \to \infty} y_{\nu_j}=y.$$
Now choose a sequence $s_\nu$ converging to infinity. Then there exists a subsequence $\nu_j$ such that the
sequence 
$$y(s_{\nu_j}) \in B_{\rho_0}(\Omega)$$
converges to a critical point of $\mathcal{A}^{H_{\mu_1}}$. This finishes the proof of the Proposition. 
\end{proof}

\begin{cor}\label{fastcor}
Under the assumptions of Proposition~\ref{fast} there exists $\mu_1>\mu_\infty$ such that for every
$\mu \in (\mu_\infty,\mu_1)$ it holds that $\mathrm{crit}\mathcal{A}^{H_\mu}\cap U \neq \emptyset$.
\end{cor}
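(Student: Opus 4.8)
The plan is to deduce the Corollary directly from Proposition~\ref{fast}, the only real task being to check that the base parameter $\mu_0$ and the width $\delta_3$ furnished by that Proposition can be held fixed while the target parameter varies, and then to pass from $B_{\rho_0}(\Omega)$ to the neighborhood $U$. Concretely, I would first record that the hypotheses of Proposition~\ref{fast} single out a fixed $\mu_0 \in (\mu_\infty-\delta_3,\mu_\infty)$ for which the no-breaking condition \eqref{nobreak} holds, i.e. $B_{\rho_0}(\Omega) \cap \mathrm{crit}\mathcal{A}^{H_{\mu_0}} = \{(v_{\mu_0},\tau_{\mu_0})\}$. Neither this $\mu_0$ nor the constant $\delta_3 = \min\{\delta_1,\delta_2\}$ depends on the choice of the upper endpoint appearing in Proposition~\ref{fast}, so I may invoke that Proposition repeatedly with the same $\mu_0$ and the same $\delta_3$ while letting its upper parameter range over an entire interval above $\mu_\infty$.

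Second, I would set $\mu_1 := \min\{\mu_\infty+\delta_3,\,1\}$, which satisfies $\mu_1 > \mu_\infty$ because $\mu_\infty < 1$ by hypothesis (ii). For an arbitrary $\mu \in (\mu_\infty,\mu_1)$ the chain of inequalities $\mu_\infty-\delta_3 < \mu_0 < \mu_\infty < \mu < \mu_\infty+\delta_3$ holds (and $\mu \leq 1$), while \eqref{nobreak} is satisfied by the fixed $\mu_0$. Hence Proposition~\ref{fast}, applied with this $\mu$ in the role of its upper parameter, yields a chord in $B_{\rho_0}(\Omega)$, that is $\mathrm{crit}\mathcal{A}^{H_\mu} \cap B_{\rho_0}(\Omega) \neq \emptyset$ for every such $\mu$.

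Finally, I would upgrade the conclusion from $B_{\rho_0}(\Omega)$ to $U$. Since $\Omega$ is compact and contained in the open set $U$ of hypothesis (i), the distance from $\Omega$ to the complement of $U$ is strictly positive; choosing $\rho_0$ at the outset no larger than this distance (in addition to properties (a)--(c)) guarantees $B_{\rho_0}(\Omega) \subseteq U$. With this inclusion in force, $\mathrm{crit}\mathcal{A}^{H_\mu} \cap B_{\rho_0}(\Omega) \neq \emptyset$ immediately gives $\mathrm{crit}\mathcal{A}^{H_\mu} \cap U \neq \emptyset$ for every $\mu \in (\mu_\infty,\mu_1)$, completing the proof. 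I expect no substantial obstacle here: all of the analytic difficulty, namely the compactness and no-escape estimates of Propositions~\ref{nichtaus1}, \ref{nichtaus2}, and \ref{bubble} together with the homotopy-of-homotopies argument, is already packaged into Proposition~\ref{fast}. The one point requiring care is precisely the observation that $\mu_0$ and $\delta_3$ are uniform in the target parameter, so that a single interval $(\mu_\infty,\mu_1)$ works simultaneously for all $\mu$.
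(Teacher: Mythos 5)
Your first two steps are sound, and they match what the paper's own argument uses implicitly: the constant $\delta_3=\min\{\delta_1,\delta_2\}$ and a parameter $\mu_0$ satisfying \eqref{nobreak} do not depend on the upper parameter, so Proposition~\ref{fast} can indeed be invoked with the same $\mu_0$ for every $\mu\in(\mu_\infty,\mu_\infty+\delta_3)$, producing a critical point of $\mathcal{A}^{H_\mu}$ in $B_{\rho_0}(\Omega)$ for each such $\mu$. The gap is your final step, the inclusion $B_{\rho_0}(\Omega)\subseteq U$. The ball $B_{\rho_0}(\Omega)$ is taken with respect to the metric $d$ of Section~\ref{gradient}, which is an $L^2$-type Riemannian distance on $\mathcal{P}\times(0,\infty)$, and the topology it induces is strictly coarser than the ($C^0$, hence $C^\infty$) path-space topology in which the set $U$ of hypothesis (i) is open; note that hypotheses (i)--(ii) are stated \emph{before} $d$ is even introduced, so ``open'' there refers to the standard topology of $\mathcal{P}\times(0,\infty)$. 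A path that agrees with a chord of $\Omega$ except for a large excursion concentrated on a tiny $t$-interval has arbitrarily small $d$-distance to $\Omega$ while remaining uniformly far from $\Omega$, and nothing forces such paths to lie in $U$. Hence the $d$-distance from the compact set $\Omega$ to the complement of $U$ can be zero: the standard fact ``a compact set inside an open set has positive distance to the complement'' requires $U$ to be open in the metric topology of $d$, which is not given. So no choice of $\rho_0>0$, however small, guarantees $B_{\rho_0}(\Omega)\subseteq U$, and your concluding inclusion is unjustified.

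The missing idea, which is how the paper bridges $B_{\rho_0}(\Omega)$ and $U$, is that the points supplied by Proposition~\ref{fast} are not arbitrary elements of the $d$-ball but \emph{chords}, i.e.\ solutions of an ODE, and for chords one has compactness in the strong topology. Arguing by contradiction, if no interval $(\mu_\infty,\mu_1)$ worked, there would be a sequence $\mu_\nu>\mu_\infty$ converging to $\mu_\infty$ with $\mathrm{crit}\mathcal{A}^{H_{\mu_\nu}}\cap U=\emptyset$, while Proposition~\ref{fast} yields chords $y_\nu\in\mathrm{crit}\mathcal{A}^{H_{\mu_\nu}}\cap B_{\rho_0}(\Omega)$ with $y_\nu\notin U$. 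Their periods are uniformly bounded above and away from zero (by the definition of the product metric, compactness of $\Omega$, and property (c)), the chord equation makes the paths equicontinuous, and Arzela--Ascoli plus bootstrapping gives a subsequence converging in the $C^\infty$-topology to a chord on $\Sigma_{\mu_\infty}$, which by Lemma~\ref{ball} (property (a)) must lie in $\Omega\subset U$. Since $U$ is open in the strong topology and the convergence holds in that topology, $y_{\nu_j}\in U$ for large $j$, a contradiction. This compactness-of-chords argument is exactly what replaces your unavailable metric inclusion; incorporating it closes the gap.
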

\begin{proof}
We argue by contradiction. In this case we can assume by Proposition~\ref{fast} that there exists a sequence
$\mu_\nu>\mu_\infty$ converging to $\mu_\infty$ with the property that
$$\mathrm{crit}\mathcal{A}^{H_{\mu_\nu}}\cap B_{\rho_0}(\Omega) \neq \emptyset, \quad \mathrm{crit}\mathcal{A}^{H_{\mu_\nu}}\cap U = \emptyset.$$
Hence we choose
$$y_\nu \in \mathrm{crit}\mathcal{A}^{H_{\mu_\nu}}\cap B_{\rho_0}(\Omega),\quad y_\nu \notin U.$$
Because $\mu_\nu$ converges to $\mu_\infty$ there exists a subsequence $\nu_j$ and a chord $y \in \Omega$ such that
$$\lim_{j \to \infty}y_{\nu_j}=y.$$
However, in view of the continuity of the chord equation this implies that there exists $j_0 \in \mathbb{N}$ such that
$$y_{\nu_j} \in U, \quad j \geq j_0.$$
This contradiction proves the Corollary. 
\end{proof}

We are now in position to prove the main result of this section.

\begin{proof}[Proof of Theorem~\ref{mainthm2}:]
By assumption of the Theorem and by Corollary~\ref{fastcor} we conclude that 
for $\mu \in (\mu_\infty-\delta_3,\mu_\infty)$ it holds that
$$\#(B_{\rho_0}(\Omega) \cap \mathrm{crit}\mathcal{A}^{H_\mu})\geq 2.$$
But then the same argument as in the proof of Corollary~\ref{fastcor} implies that there exists $0<\delta \leq \delta_3$
with the property that 
$$\#(U \cap \mathrm{crit}\mathcal{A}^{H_\mu})\geq 2.$$
This finishes the proof of the Theorem. 
\end{proof}

\end{document}